\newcommand\figcaption{\def\@captype{figure}\caption}
\newcommand\tabcaption{\def\@captype{table}\caption}
\newtheorem{thm}{Theorem}[section]
\newtheorem{lem}[thm]{Lemma}
\newtheorem{cor}[thm]{Corollary}
\theoremstyle{definition}
\title{\bf Orthogonally connected sets} %
\author{Xuemei He, Xiaotian Song, Liping Yuan\thanks{Corresponding author.},  Tudor Zamfirescu}
\date{}
\begin{document}\large
\vskip -8cm \maketitle \thispagestyle{plain}
\textbf{Abstract.} In this paper, we further investigate the orthogonally connected sets and establish necessary and sufficient conditions for a set to be staircase connected.

\textbf{Keywords:} Orthogonally connected sets, staircase connected sets,  convex bodies, unimodal functions.

\textbf{Mathematics Subject Classification:} 52B10; 53A05; 54B15.

\section{Introduction}

In 1982, Montuno and Fournier \rm\cite{ftc} introduced the notion of orthogonal convexity in orthogonal polygons. In 1983, Nicholl, Lee, Liao and Wong \rm\cite{otc} worked with the orthogonal convexity in orthogonal polygons, and gave the definition of a staircase.
Ottmann, Soisalon-Soininen and Wood \rm\cite{otd} generalized the orthogonal convexity to any sets and defined orthogonal connectedness and staircase connectedness.
In 2007, Magazanik and Perles  \rm\cite{scs} studied staircase stars and kernels of staircase connected sets.

The study of staircase connectedness plays a significant role in graph theory, digital image processing and VLSI circuit layout design.
Breen did important research related to the staircases.
She considered a finite family of  boxes $\mathcal{C}$ in $\mathds{R}^d$ whose intersection graph is a tree  \rm\cite{ccs}.
She also studied
 a finite family of  boxes $\mathcal{C}$ in $\mathds{R}^d$ whose intersection graph is a block graph \rm\cite{scr}.
Besides, Breen investigated Helly-type theorems \rm\cite{ios}, and established analogues of the Radon and Carath$\mathrm{\acute{e}}$odory theorems for staircase connected  sets in $\mathds{R}^d$ \rm\cite{rad}. She also obtained an analogue of  Tverberg's theorem, using staircase connected sets instead  of convex hulls \rm\cite{tve}.

In this paper, we further investigate the orthogonally connected sets and establish necessary and sufficient conditions for a set to be staircase connected.

\section{Definitions and notation}
For distinct points $x,y\in \mathds{R}^d$, let $xy$ denote the closed line-segment from $x$ to $y$, $[xy\rangle$ the half-line from $x$ through $y$, $\overline{xy}$ the line through $x$ and $y$ and $]x,y[= xy \backslash \left\{x,y\right\}$. Always $d\geq2$.

For $M\subset \mathds{R}^d$, we denote by $\mathrm{conv}M$ its convex hull, by $\complement M$ its complement, by $\overline{M}$ its affine hull, by $\mathrm{int}M$ its interior, by $\mathrm{bd}M$ its boundary and by $\mathrm{cl}M$ its closure.

 Let $M_{1},M_{2}\subset \mathds{R}^{d}$.
If   $\overline{M_{1}}$  and  $\overline{M_{2}}$ are orthogonal, we say that  $M_{1}$ is \emph{orthogonal} to $M_{2}$, using the notation  $M_{1}\perp M_{2}$. If $\overline{M_{1}}$  and  $\overline{M_{2}}$ are parallel, we say that  $M_{1}$ is \emph{parallel} to $M_{2}$, and write  $M_{1}\parallel  M_{2}$.

Let $x\in \mathds{R}^{d}$ be a point, $M$ a subset of $\mathds{R}^{d}$, and $L$ an affine subspace of $\mathds{R}^{d}$. Affine subspaces are always supposed to have dimension  at least $1$.
Denote by $\pi_{L}(x)$ and $\pi_{L}(M) $ the orthogonal projections of $x$ and $M$ onto $L$.

The $n$-dimensional Hausdorff measure will be denoted by $\mu_{n}$.

Call a polygonal path $P=[a_0,a_1,\cdot\cdot\cdot,a_n]=a_0a_1\cup a_1a_2\cup \cdot\cdot\cdot \cup a_{n-1}a_n$ in $\mathds{R}^d$ \emph{orthogonal} if every edge of $P$ is parallel to one of the coordinate axes. An orthogonal path $P$ is a $staircase$, if, along the path, all its parallel edges point in the same direction.

The set $S\subset \mathds{R}^d$ is called $orthogonally~connected$
($staircase~connected$) if every two points $p,q\in{S}$ can be joined within $S$ by an orthogonal path (staircase).\

The set $S\subset \mathds{R}^2$ is $horizontally~convex$ ($vertically~convex$) if $S$ includes every horizontal (vertical) line-segment with endpoints in $S$. If $S\subset \mathds{R}^d$ includes every line-segment parallel to some coordinate axis and having endpoints in $S$, then we say that $S$ is $orthogonally~convex$.\

For any point $p\in \mathds{R}^{2}$, denote by $ X_p$ (resp. $Y_p$) the horizontal line (resp. vertical line) through $p$.

The $horizontal~width$ $\mathrm{hw}_x( S )$ of $S\subset \mathds{R}^2$ at $x\in S$ is $\mu_{1}( L)$, where $L$ is the component of $S\cap X_x$ containing $x$. 
The $vertical~width$ $\mathrm{vw}_x( S )$ is defined analogously.

\section{Orthogonally connected sets}

In \cite{SCS}, the authors introduced the staircase distance between two points  which is  the smallest number of edges of a staircase connected them.
Now, the \emph{$s$-distance} $d(p,q)$ from $p$ to $q$ in the orthogonally connected set $M \subset \mathds{R}^{2}$ is the minimal number of edges of an orthogonal path from $p$ to  $q$ included in $M$.  
$\mathrm{diam}_s(M)=\sup\limits_{p,q\in M} d(p,q)$.
\begin{thm}
If  $M\subset \mathds{R}^{2}$ is a convex set different from a strip, then $\complement M $ is orthogonally connected and $ \mathrm{diam}_s(\complement M)\leq 4$.
\end{thm}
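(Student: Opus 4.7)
The plan is to split into cases according to the shape of $M$ and construct in each case an explicit orthogonal path of at most four edges joining any two given points $p,q\in\complement M$. I dispose first of two easy cases. If $M=\mathds{R}^{2}$ the statement is vacuous. If $M$ is a (possibly tilted) half-plane $\{\ell\le c\}$, then for $p,q\in\complement M$ one of the two rectangular corners $(p_1,q_2),(q_1,p_2)$ has $\ell$-value exceeding $c$ and therefore lies in $\complement M$; joining $p$ and $q$ through it yields a 2-edge orthogonal path.

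Assume now $M$ is convex, not $\mathds{R}^{2}$, not a half-plane, and not a strip. The structural core of the proof is the existence of axis-parallel ``highways'' in $\complement M$: I would show that, outside of a small list of exceptional shapes, there exist a horizontal line $X_h\subset\complement M$ and a vertical line $Y_w\subset\complement M$. The key lemma is the elementary fact that a concave function on $\mathds{R}$ bounded both above and below is constant; applied to the upper and lower envelopes of $M$, this forces the combination ``one coordinate projection of $M$ equals $\mathds{R}$ and the other is bounded'' to make $M$ a strip, contradicting the hypothesis. The exceptional shapes left over are (a) epigraphs or hypographs of a non-linear convex/concave function defined on all of $\mathds{R}$, and (b) wedges (intersections of two non-parallel closed half-planes) with both coordinate projections equal to $\mathds{R}$.

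In the main case, choose, after a possible reflection, $h$ above $M$ so that $X_h\subset\complement M$ and $w$ to the right of $M$ so that $Y_w\subset\complement M$. For any $p\in\complement M$ the set of directions from $p$ meeting $M$ is a convex angular sector of measure at most $\pi$, so $p$ is free (its ray in that direction stays in $\complement M$) in at least one of the two vertical directions and in at least one of the two horizontal directions, and likewise for $q$. Reflecting so that $p$ is upward-free and $q$ is rightward-free, the four-edge path
\[
p\;\to\;(p_1,h)\;\to\;(w,h)\;\to\;(w,q_2)\;\to\;q
\]
lies in $\complement M$ edge by edge: the first by the upward-freeness of $p$, the second and third because $X_h,Y_w\subset\complement M$, and the fourth by the rightward-freeness of $q$. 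When the free-direction labels of $p$ and $q$ are compatible, consecutive edges merge and the path collapses to 3 or 2 edges. For exceptional case (a) every $p\in\complement M$ is automatically free in the same vertical direction (since $M$ is an epigraph or hypograph), so a single 3-edge ``below/above'' path $p\to(p_1,l)\to(q_1,l)\to q$ works with $l$ on the unbounded side. For exceptional case (b), $\complement M$ is a union of two open half-planes whose ``opposite wedge'' is non-empty; routing through any point of that opposite wedge and applying the half-plane construction on each side yields a path of at most 3 edges.

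The main obstacle I foresee is the structural dichotomy: verifying that the exceptional shapes (a) and (b) are the only obstructions to having both horizontal and vertical escape lines, and in case (b) checking that the opposite wedge is always unbounded ``diagonally'' so that the routing point can be chosen without ever forcing a crossing of $M$. Once this is settled, the main 4-edge construction is essentially mechanical, with the 16 combinations of free-direction types of $(p,q)$ collapsing to a single universal path up to reflections across the coordinate axes.
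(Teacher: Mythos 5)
Your main-case construction has a genuine gap: you cannot ``reflect so that $p$ is upward-free and $q$ is rightward-free'' after the escape lines have already been normalized to lie above and to the right of $M$. A reflection moves $M$, the escape lines and the points simultaneously, so the relevant relation -- ``$p$ is free in the vertical direction pointing towards an existing horizontal escape line'' -- is reflection-invariant and may simply fail. When $M$ is unbounded, horizontal lines in $\complement M$ may exist on only one side of $M$ and vertical ones on only one side as well, and then neither of your two templates need apply. Concretely, take the closed half-strip $M=\{(x,y): x\ge 1,\ -x\le y\le -x+1\}$ (convex, not a strip): horizontal escape lines exist only above $M$, vertical ones only to its left, yet $p=(10,-20)$ and $q=(25,-30)$ both lie \emph{below} points of $M$, so neither is free towards the unique available horizontal escape line; your path and its $p\leftrightarrow q$ transpose both fail for this pair (a short path does exist, but only through a template you have not listed). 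Ruling such mismatches out in general is not mechanical: it needs an argument about the recession cone and the structure of the vertical/horizontal sections of $M$ (and it is genuinely delicate -- for non-closed convex $M$ one can even arrange two points of $\complement M$ at $s$-distance $5$, so any repair must use the same kind of care the paper expends in its ``two half-lines point in the same direction'' case, where exactly this situation is treated via the recession cone, and in its final ``four different directions'' case).

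The second gap is the one you flagged yourself: the structural dichotomy is asserted, not proved, and as stated the list of exceptional shapes is not quite right. Sets with $\pi_y(M)=\mathds{R}$ but $\pi_x(M)\neq\mathds{R}$, such as $\{(x,y): x\ge y^2\}$, are neither epigraphs/hypographs of functions of $x$ nor wedges, so your case (a) must be read up to interchanging the two coordinates (and the $3$-edge construction transposed accordingly); conversely, the wedges of case (b) are themselves epigraphs or hypographs over one of the axes, so (b) is subsumed once (a) is stated correctly. The classification itself is true (if one projection is all of $\mathds{R}$, convexity forces all sections in that direction to be unbounded on the same side, or all bounded, which gives a strip, half-plane or plane), but it is not in the proposal. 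By contrast, the paper avoids any global classification: it works pointwise with the four free half-lines at $p$ and $q$, splits on whether two of them are parallel, and handles the remaining case with a corner point taken near a bounded component of $M\setminus pq$; the hard configurations in the paper's case analysis are precisely the ones your reflection shortcut skips, so as it stands the proposal is not a complete proof.
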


\begin{proof}
Choose $p,q\in \complement M$.
Because  $M$ is not a strip, $ \complement M$ is connected.
If $pq\subset \complement M$, then $ p\pi_{X_p}(q)\cup \pi_{X_p}(q)q$ or $  p\pi_{Y_p}(q)\cup \pi_{Y_p}(q)q$ lies in $ \complement M$ and $d(p,q) \leq 2$.

Now, suppose $pq\cap  M \neq \emptyset$. Because $M$ is convex, for any $w\in \complement M$, there is a line  $L\ni w $ such that  one component of $ \mathds{R}^{2}\setminus L$ is disjoint from $ M $.
Consequently, there are  in $ \complement M$ four half-lines, $X'_p \subset X_p,Y'_p \subset Y_p $  starting at $p$, and  $X'_q \subset X_q,Y'_q \subset Y_q $ starting at $q$.

 \begin{figure}[h]
\centering
\includegraphics[width=0.6\textwidth]{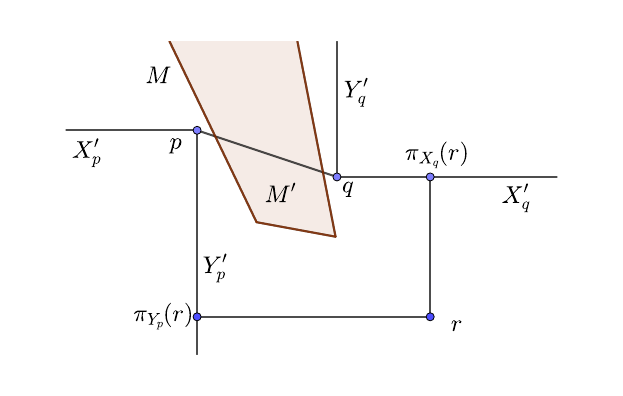}\\
\caption{}
\label{thm21}
\end{figure}

If there are two half-lines  among $ X'_p,Y'_p, X'_q,Y'_q $
pointing in the same direction, say $X'_p,X'_q$,   and there is no point $w\in  X'_p$ such that $ Y_w\subset \complement M$,
then  $M $ is unbounded and its recession cone is the positive or negative  $x$-axis $ X'_{\bf 0}$.
Without loss of generality, suppose $\pi_{X_q}(p)\in X'_q$.
If $ M\cap p\pi_{X_q}(p)= \emptyset$, $ p\pi_{X_q}(p)\cup \pi_{X_q}(p)q\subset \complement M$ is an orthogonal path and $d(p,q) \leq 2$.
If  $ M\cap p\pi_{X_q}(p)\neq \emptyset$, since the recession cone of $M$ is $ X'_{\bf 0}$,  $M\subset \mathrm{int}\mathrm{conv}(X_p \cup X_q)$.
So,  $ X_p,X_q\subset  \complement M$.
Since $\complement M$ is connected  and   $X_p\cup X_q \subset \complement M$,  there exists a point $ s\in X_p\setminus  X'_p$ such that $ Y_s\subset \complement M$. Then,  $ ps\cup s\pi_{X_q}(s) \cup \pi_{X_q}(s)q$ is an orthogonal path in $ \complement M$, and $d(p,q) \leq 3$.

If $X'_p,X'_q$ point in  the same direction and there exists a point $t\in  X'_p$ such that  $Y_t \subset \complement M$,
then $ pt\cup t\pi_{X_q}(t) \cup \pi_{X_q}(t)q$ is an orthogonal path in $ \complement M$, and $d(p,q) \leq 3$.

Suppose that $ X'_p,Y'_p, X'_q,Y'_q $ point in  different directions.
Let $M'$ be a bounded component of $M\setminus pq$. Assume without loss of generality that we are in the situation illustrated in Figure \ref{thm21}. Then there obviously exists a point $r $ lying on the same side of  $\overline{pq}$  with $M'$ such that $ \pi_{X_q}(r)\in X'_q, \pi_{Y_p}(r)\in Y'_p$  and  $ r\pi_{X_q}(r) \cup r\pi_{Y_p}(r)\subset \complement M$. Then $ p\pi_{Y_p}(r)\cup r\pi_{Y_p}(r)\cup r\pi_{X_q}(r)\cup q\pi_{X_q}(r)\subset \complement M$ and $\mathrm{diam}_s(\complement M)\leq 4$.
\end{proof}

\begin{lem}\label{dulem} {\rm\cite{du}}
Every open connected set is orthogonally connected.
\end{lem}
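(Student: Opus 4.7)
Let $U\subset\mathds{R}^d$ be open and connected and fix $p,q\in U$. An open connected subset of $\mathds{R}^d$ is automatically path connected, so there exists a continuous path $\gamma:[0,1]\to U$ with $\gamma(0)=p$ and $\gamma(1)=q$. The image $K:=\gamma([0,1])$ is a compact subset of the open set $U$, hence $\varepsilon:=d(K,\mathds{R}^d\setminus U)>0$.

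The idea is to replace $\gamma$ by a suitable axis-aligned zigzag. By uniform continuity of $\gamma$ I would choose a partition $0=t_{0}<t_{1}<\cdots<t_{n}=1$ so that $\|\gamma(t_{i+1})-\gamma(t_{i})\|<\varepsilon$ for every $i$, and set $v_i:=\gamma(t_i)$. The geometric fact I would rely on is that the axis-aligned bounding box $B_i$ of the pair $v_i,v_{i+1}$ has Euclidean diameter exactly $\|v_{i+1}-v_i\|$. Consequently $B_i$ is contained in the open Euclidean $\varepsilon$-ball centred at $v_i$, and therefore $B_i\subset U$.

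Inside each $B_i$ I would then connect $v_i$ to $v_{i+1}$ by an orthogonal path of at most $d$ edges, correcting one coordinate at a time: first move from $v_i$ parallel to the first coordinate axis until that coordinate of $v_{i+1}$ is reached, then parallel to the second axis, and so on. All intermediate vertices and segments remain in $B_i\subset U$. Concatenating these $n$ short orthogonal paths produces an orthogonal path from $p$ to $q$ that lies in $U$, proving that $U$ is orthogonally connected.

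The one step that genuinely needs the geometry of $\mathds{R}^d$ is the containment $B_i\subset U$, and this is exactly where the bounding-box observation is used: since the diameter of the axis-aligned box of two points coincides with their Euclidean distance, a sufficiently fine partition of $\gamma$ forces every local staircase to remain in the $\varepsilon$-tube around $K$. The remainder of the argument is a standard compactness-plus-uniform-continuity routine, so I do not expect any substantial obstacle.
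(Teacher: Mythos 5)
The paper itself gives no argument for this lemma: it is quoted from the manuscript \cite{du}, so there is no in-paper proof to compare against. Your proposal is a correct and complete proof of the statement. The chain of steps is sound: an open connected subset of $\mathds{R}^d$ is path connected; the image $K=\gamma([0,1])$ is compact and disjoint from the closed set $\mathds{R}^d\setminus U$, so $\varepsilon>0$; the axis-aligned box spanned by $v_i$ and $v_{i+1}$ has diameter $\|v_{i+1}-v_i\|<\varepsilon$, hence lies in the $\varepsilon$-ball about $v_i\in K$ and therefore in $U$; and correcting one coordinate at a time yields an orthogonal path of at most $d$ edges inside that box. Only two trivial caveats deserve a sentence in a written-up version: if $U=\mathds{R}^d$ the complement is empty and one simply takes any $\varepsilon>0$; and when some coordinates of $v_i$ and $v_{i+1}$ coincide (or $v_i=v_{i+1}$) the corresponding degenerate edges should be discarded, which is harmless since orthogonal paths, unlike staircases, impose no monotonicity on consecutive edges.
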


\begin{thm}\label{orccon2}
 If $C \subset \mathds{R}^{2}$ is a compact  orthogonally connected set and $K\subset \mathrm{int}C$ is a convex body, then $ C\setminus \mathrm{int}K$ is orthogonally connected.
\end{thm}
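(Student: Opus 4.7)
The plan is to take any orthogonal path $P$ in $C$ joining $p,q\in C\setminus\mathrm{int}K$ and locally reroute $P$ around $K$ wherever it enters $\mathrm{int}K$, using Lemma~\ref{dulem} applied to a thin open annular neighborhood of $\mathrm{bd}K$.

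Since $K$ is a compact convex body with $K\subset\mathrm{int}C$, first pick $\varepsilon>0$ so small that $N_{\varepsilon}(K)=\{x\in\mathds{R}^2:d(x,K)\le\varepsilon\}\subset\mathrm{int}C$, and set $U=\{x\in\mathds{R}^2:0<d(x,K)<\varepsilon\}$. Because $K$ is a planar convex body, $U$ is an open topological annulus around $\mathrm{bd}K$ and hence connected; by Lemma~\ref{dulem} it is therefore orthogonally connected.

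Next, $P$ consists of finitely many axis-parallel edges, and each edge meets the open convex set $\mathrm{int}K$ in at most one open subsegment. Parametrizing $P:[0,n]\to C$ with $P(0)=p$ and $P(n)=q$, the preimage $P^{-1}(\mathrm{int}K)\subset[0,n]$ is therefore a finite disjoint union of open intervals $(s_i,t_i)$, and the points $a_i:=P(s_i),\,b_i:=P(t_i)$ lie on $\mathrm{bd}K$ (since $p,q\notin\mathrm{int}K$ and continuity of $P$ force both endpoints of each component onto $\mathrm{bd}K$). It suffices to replace each arc $P([s_i,t_i])$ by an orthogonal detour from $a_i$ to $b_i$ contained in $C\setminus\mathrm{int}K$.

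To construct such a detour at a point $r\in\{a_i,b_i\}$, I fix an outward unit normal $\nu$ to $K$ at $r$. At least one coordinate of $\nu$ is nonzero, so one of the four unit vectors $e$ parallel to a coordinate axis satisfies $\nu\cdot e>0$. For $\delta\in(0,\varepsilon)$ sufficiently small, the segment from $r$ to $r+\delta e$ is axis-parallel; its relative interior lies in the open half-plane $\{x:\nu\cdot(x-r)>0\}$, which is disjoint from $K$ by choice of $\nu$; and the whole segment sits inside $N_{\varepsilon}(K)\subset C$. Hence $r+\delta e\in U$ and the segment lies in $C\setminus\mathrm{int}K$. Applying this at $r=a_i$ and $r=b_i$ produces points $a_i',b_i'\in U$, and orthogonal connectedness of $U$ furnishes an orthogonal path $\sigma_i\subset U\subset C\setminus\mathrm{int}K$ from $a_i'$ to $b_i'$. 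Then $a_ia_i'\cup\sigma_i\cup b_i'b_i$ is an orthogonal path in $C\setminus\mathrm{int}K$ from $a_i$ to $b_i$; substituting each $P([s_i,t_i])$ by its detour yields the desired orthogonal path from $p$ to $q$ in $C\setminus\mathrm{int}K$. The main technical point I expect to verify carefully is that the short outward axis-parallel segments at $a_i,b_i$ genuinely avoid $\mathrm{int}K$, which reduces to the supporting-line property of the convex body $K$.
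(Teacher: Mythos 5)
Your proposal is correct, but it takes a genuinely different route from the paper. The paper's proof works directly with $M=C\setminus \mathrm{int}K$: it asserts that $M$ is connected, applies Lemma~\ref{dulem} to $\mathrm{int}M$ when both points are interior, and otherwise joins a boundary point of $M$ to an interior point (via the orthogonal connectedness of $C$ if the point lies on $\mathrm{bd}C$, or via a short axis-parallel segment if it lies on $\mathrm{bd}K$). You instead start from an orthogonal path in $C$ supplied by the hypothesis and perform local surgery: the finitely many excursions of the path into $\mathrm{int}K$ (at most one per edge, with endpoints forced onto $\mathrm{bd}K$) are replaced by detours through the open parallel annulus $U=\{0<d(\cdot,K)<\varepsilon\}\subset \mathrm{int}C$, to which alone Lemma~\ref{dulem} is applied, the entry into $U$ being achieved by an axis-parallel step on the outward side of a supporting line of $K$. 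The trade-off: the paper's argument is shorter, but it silently relies on connectedness of $M$ and especially of $\mathrm{int}M$, which is not proved and is in fact delicate (a compact orthogonally connected $C$ can have disconnected interior, e.g.\ two closed squares meeting in a single corner point, so joining two interior points \emph{inside} $\mathrm{int}M$ is not always possible as stated); your argument sidesteps all global connectivity claims about $M$, using only the connectedness of the annulus $U$, which follows at once from the convexity of $K$, and it exploits the orthogonal connectedness of $C$ exactly where it is needed. The only points you should still write out are the (standard) justification that $U$ is connected -- a nearest-point/radial argument suffices -- and the observation that the endpoints $a_i,b_i$ of each excursion lie on $\mathrm{bd}K$ because they belong to $K\setminus\mathrm{int}K$; with those lines added, the proof is complete.
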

\begin{proof}
Let $M=C\setminus \mathrm{int}K$.   Since $K$ is  a compact convex set inside $\mathrm{int}C$,  $ M$ is connected.  Choose any two points $a,b\in M$.

If $a,b \in \mathrm{int}M$, then by   Lemma  \ref{dulem},
they are joined by an orthogonal path inside $ \mathrm{int}M$.

If at least one of the two points belongs to  $\mathrm{bd}M$, say $ a$, then,
 for   $a\in  \mathrm{bd}C$, there is an orthogonal path in $M$ joining $a $ with some point in $ \mathrm{int}M$
  since $C$ is orthogonally connected;
for  $a\in  \mathrm{bd}K$, there exists a line-segment  $aa'\subset M$, parallel to any  coordinate axis, with $a'\in \mathrm{int}M$.  
Hence, there is an orthogonal path $P_a$ joining $a\in \mathrm{bd}M$ and some interior point $a'\in \mathrm{int}M$. Similarly, we can find an orthogonal path $P_b$ joining $b\in M$ and some interior point $b'\in \mathrm{int}M$.  The union of $P_a\cup P_b$  with an orthogonal path between $a', b'$ provides an orthogonal path from $a$ to $b$.
\end{proof}

 Using the same method, we  get the following result.

\begin{thm}\label{orccon}
If  $\mathcal{K}=\{ K_i:i=1,2,\ldots,n\}$  is a family of pairwise disjoint convex bodies in  $\mathds{R}^{d}$, then  $\bigcap\limits_{i=1}^{n} \complement (\mathrm{int}K_i)$ is orthogonally connected.
\end{thm}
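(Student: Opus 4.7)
Set $S=\bigcap_{i=1}^{n}\complement(\mathrm{int}K_i)=\mathds{R}^{d}\setminus\bigcup_{i=1}^{n}\mathrm{int}K_i$; since each convex body $K_i$ equals the closure of its interior, one also has $\mathrm{int}S=\mathds{R}^{d}\setminus\bigcup_{i=1}^{n}K_i$. The plan is to mirror the proof of Theorem \ref{orccon2}. First, show that $\mathrm{int}S$ is a nonempty open connected set, so that Lemma \ref{dulem} renders it orthogonally connected. Second, show that every $a\in\mathrm{bd}S$ can be joined to a point of $\mathrm{int}S$ by a single axis-parallel segment lying in $S$. Finally, for any $a,b\in S$, concatenate such boundary-to-interior segments with an interior orthogonal path to obtain the required orthogonal path from $a$ to $b$ in $S$.

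For the connectedness of $\mathrm{int}S$, the plan is to argue directly: given two points of $\mathrm{int}S$, join them by a straight segment, and whenever this segment meets some $K_i$, replace the offending subsegment by a small detour staying arbitrarily close to $K_i$ inside a $2$-plane containing the segment. The hypothesis $d\geq 2$ provides room for such a detour, while pairwise disjointness and the positive distances $\mathrm{dist}(K_i,K_j)>0$ guarantee that a sufficiently thin detour stays within $\mathrm{int}S$. Hence $\mathrm{int}S$ is arcwise connected, and Lemma \ref{dulem} supplies an orthogonal path between any two of its points.

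For the boundary-to-interior step, observe that $\mathrm{bd}S=\bigcup_{i=1}^{n}\mathrm{bd}K_i$, and each $a\in\mathrm{bd}S$ lies on exactly one $\mathrm{bd}K_{i_0}$ and has positive distance from the remaining bodies. Pick a supporting hyperplane $H$ of $K_{i_0}$ at $a$ with outward unit normal $\nu$. Since $\nu\neq 0$ cannot be orthogonal to every vector of the standard basis, some coordinate vector $\pm e_k$ satisfies $\langle\pm e_k,\nu\rangle>0$; the corresponding ray from $a$ enters the open half-space opposite $K_{i_0}$, so shrinking its length to be smaller than $\mathrm{dist}\bigl(a,\bigcup_{j\neq i_0}K_j\bigr)$ produces a segment $aa'\subset S$ with $a'\in\mathrm{int}S$. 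The only genuinely substantive step is the arcwise connectedness of $\mathrm{int}S$ in arbitrary dimension, since the purely local argument in Theorem \ref{orccon2} exploited the ambient compact set $C$; everything else is a direct transcription of that proof, which is consistent with the authors' remark that the same method applies.
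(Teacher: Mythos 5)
Your proposal is correct and takes essentially the same route as the paper's (very terse) proof: establish connectedness, join each boundary point of the set to the interior by a single axis-parallel segment, and apply Lemma \ref{dulem} to the open connected interior, exactly the method of Theorem \ref{orccon2} that the authors invoke. The only difference is that you spell out details the paper leaves implicit, namely the planar detour argument for the connectedness of the interior and the supporting-hyperplane choice of a coordinate direction for the boundary-to-interior segment.
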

\begin{proof}
Let $M=\bigcap\limits_{i=1}^{n}  \complement (\mathrm{int}K_i)$.
Since  $ K_i\cap K_j =\emptyset$, for any distinct $i,j$, $M$ is connected and,  for any point $p\in M$, there is a line-segment  in $M$ starting at $p$, parallel to some  coordinate axis and meeting $\mathrm{int}M $.
Therefore, by  Lemma  \ref{dulem}, $ M$ is  orthogonally connected.
\end{proof}
\begin{figure}[h]
\centering
\includegraphics[width=0.4\textwidth]{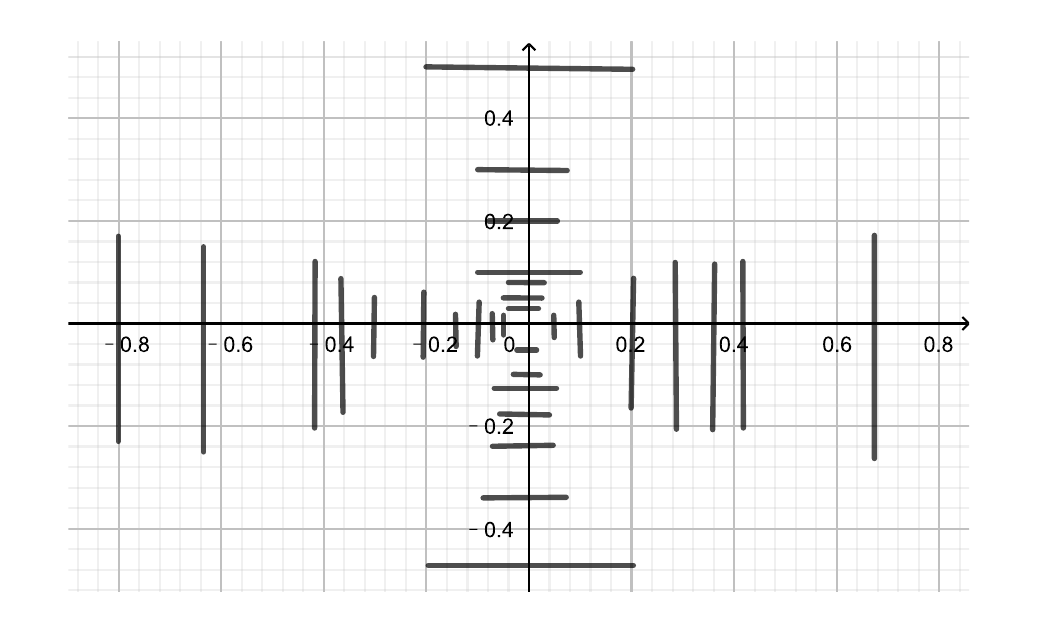}\\
\caption{}
\label{infifan}
\end{figure}

Theorem \ref{orccon} is not valid for  infinite families.
 Let  $\mathcal{K}$ be an infinite family of convex sets in  $\mathds{R}^{2}$  constructed as follows. See Figure \ref{infifan}.
Let $\{L_k\}_{k=1}^\infty$  be  a sequence of line-segments   converging to the origin $ {\bf 0}$ with $X_{\bf 0}\cap L_k \neq \emptyset$ and $ L_k\perp X_{ \bf 0}$.
For each $k$ take a parallel body $L'_k$ of $L_k$, such that all of them be pairwise disjoint. Consider an analogous sequence $\{ J_k\}_{k=1}^\infty$  with $Y_{ \bf 0}\cap J_k \neq \emptyset$ and $ J_k\perp Y_{ \bf 0}$. Arrange that $ (\pm L_i)\cap (\pm J_j)=\emptyset$, for all $i,j$. Also, consider the analogous parallel bodies $J'_k$.
  Now, take $\mathcal{K}=\{L_i'\}_{i=1}^\infty \cup \{-L_i'\}_{i=1}^\infty\cup  \{J_i'\}_{i=1}^\infty \cup \{-J_i'\}_{i=1}^\infty $.
Then ${\bf 0}\in \bigcap\limits_{K\in \mathcal{K}} \complement (\mathrm{int}K)$ and there is no orthogonal path starting at $ {\bf 0}$ in $\bigcap\limits_{K\in \mathcal{K}} \complement (\mathrm{int}K)$.

\begin{thm}\label{orcylin}
If $K \subset \mathds{R}^{2}$ is orthogonally connected, then  any right cylinder in $\mathds{R}^{3}$ based on $K$, as well as its  boundary,   are orthogonally connected.
\end{thm}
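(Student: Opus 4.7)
The plan is to prove both assertions by lifting orthogonal paths from $K$ into $\mathds{R}^3$ at a fixed height and gluing them with vertical segments along generators of the cylinder. Write $C=K\times[a,b]$ for the right cylinder (with the $z$-axis as the generating direction, so that vertical segments are axis-parallel in $\mathds{R}^3$).

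For the cylinder itself, given $p=(p',\alpha)$ and $q=(q',\beta)$ in $C$, the orthogonal connectedness of $K$ furnishes an orthogonal path $\gamma\subset K$ from $p'$ to $q'$. Then $\gamma\times\{\alpha\}$ is an orthogonal path in $K\times\{\alpha\}\subset C$ from $p$ to $(q',\alpha)$, and appending the vertical segment $\{q'\}\times[\min(\alpha,\beta),\max(\alpha,\beta)]\subset\{q'\}\times[a,b]\subset C$ produces an orthogonal path from $p$ to $q$ entirely inside $C$.

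For the boundary $B=\mathrm{bd}\,C$, I will assume $K$ is closed with $K\neq\mathds{R}^2$ (so $\mathrm{bd}\,K\neq\emptyset$); the degenerate case where $K$ has empty interior in $\mathds{R}^2$ gives $\mathrm{int}\,C=\emptyset$, hence $B=\overline{C}=K\times[a,b]=C$, which reduces to the first part. Under these assumptions $B=(\mathrm{bd}\,K\times[a,b])\cup(K\times\{a,b\})$. Fix any $q_0=(q_0^*,a)$ with $q_0^*\in\mathrm{bd}\,K$; it suffices to show that every $p=(p',p_3)\in B$ can be joined to $q_0$ by an orthogonal path inside $B$, since orthogonal paths concatenate. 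If $p_3\in\{a,b\}$, lift an orthogonal path in $K$ from $p'$ to $q_0^*$ to height $p_3$ -- this lies in the face $K\times\{p_3\}\subset B$ -- and then descend along the generator $\{q_0^*\}\times[a,b]\subset\mathrm{bd}\,K\times[a,b]\subset B$ down to $q_0$. If $p_3\in(a,b)$ then necessarily $p'\in\mathrm{bd}\,K$, so first descend along $\{p'\}\times[a,p_3]\subset B$ to $(p',a)$ and apply the previous step.

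The main obstacle is bookkeeping -- verifying at each step that the produced edge genuinely lies in $B$ rather than merely in $C$. Horizontal pieces are safe because they are taken inside the top or bottom face $K\times\{a\}$ or $K\times\{b\}$; vertical pieces are safe because their first two coordinates lie in $\mathrm{bd}\,K$, placing them in the lateral surface. The only step invoking orthogonal connectedness of $K$ in a nontrivial way is the connection inside $K$ from an interior point $p'$ to $q_0^*\in\mathrm{bd}\,K\subset K$, and this is exactly what the hypothesis (together with $K$ being closed, so that $\mathrm{bd}\,K\subset K$) supplies.
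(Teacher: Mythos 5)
Your construction is correct: lifting an orthogonal path of $K$ to a fixed height and appending a segment parallel to the $z$-axis settles the solid cylinder, and routing boundary points through the bottom face $K\times\{a\}$ via a fixed generator over a point of $\mathrm{bd}\,K$ settles the boundary, with each edge verifiably inside $\mathrm{bd}\,C=(\mathrm{bd}\,K\times[a,b])\cup(K\times\{a,b\})$. The paper gives no argument here (it explicitly leaves the ``easy proof to the reader''), and your lift-and-glue path is exactly the intended one; your explicitly stated reading that $K$ is closed and $K\neq\mathds{R}^2$ (so that the boundary decomposition holds and $\mathrm{bd}\,K\neq\emptyset$) is a reasonable interpretation of ``right cylinder based on $K$'' and is needed for the boundary assertion, so flagging it is appropriate rather than a defect.
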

We leave the easy  proof to the reader.
\section{Staircase connectedness in the plane}

The following characterization  was obtained by Magazanik and Perles in \rm\cite{SCS}.

\begin{lem}\label{lem0} 
A set $S\subset \mathds{R}^2$ is staircase connected if and only if $S$ is orthogonally connected and orthogonally convex.
\end{lem}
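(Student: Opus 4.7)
The ``only if'' direction is short. Every staircase is an orthogonal path, so staircase connectedness immediately gives orthogonal connectedness. For orthogonal convexity, pick $p,q\in S$ with $pq$ axis-parallel, say horizontal, and fix a staircase $\sigma\subset S$ from $p$ to $q$. All vertical edges of $\sigma$ point in one common direction, so the signed sum of their lengths equals the total vertical displacement $q_2-p_2=0$; hence every vertical edge is degenerate. Then $\sigma$ consists only of consistently oriented horizontal edges, and monotonicity in the horizontal coordinate forces $\sigma=pq$, so $pq\subset S$.

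For the converse, fix $p,q\in S$ and, after reflecting axes if necessary, assume $p_1\leq q_1$ and $p_2\leq q_2$; the goal is a northeast monotone staircase from $p$ to $q$ in $S$. By orthogonal connectedness there is an orthogonal path $P\subset S$ from $p$ to $q$, and I would induct on the number of edges of $P$. If $P$ is monotone in both coordinates, it is already a staircase. Otherwise $P$ has some coordinate extremum; by symmetry consider an $x$-peak at level $x=x^*$. After combining consecutive parallel edges and using orthogonal convexity to straighten any vertical zigzag at $x=x^*$ into a single vertical segment (its endpoints lie on $P\subset S$, so the segment lies in $S$), the peak takes the form of a three-edge block $(x^*_s,y^*)\to(x^*,y^*)\to(x^*,y^{**})\to(x^*_e,y^{**})$ with rightward, vertical, leftward edges. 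If $x^*_e\geq x^*_s$, I shortcut to the L-shape $(x^*_s,y^*)\to(x^*_e,y^*)\to(x^*_e,y^{**})$: the horizontal part lies on the original rightward edge, while the vertical part is in $S$ by orthogonal convexity, since both its endpoints lie on $P$. If $x^*_e<x^*_s$ instead, I shortcut to the symmetric L-shape $(x^*_s,y^*)\to(x^*_s,y^{**})\to(x^*_e,y^{**})$. Either way three edges collapse to two and the induction proceeds, with symmetric moves handling $x$-valleys as well as $y$-peaks and $y$-valleys.

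\textbf{Main obstacle.} The delicate point is verifying that this local shortcut procedure terminates at a staircase rather than at a path that is merely shorter. Termination is clear since each shortcut strictly decreases the edge count. The subtler claim is that an $x$-shortcut creates no new $y$-extremum (and vice versa): this holds because each shortcut is purely local, preserves the $y$-coordinate at the block's entry and exit, and leaves the rest of $P$ untouched, so the $y$-monotonicity pattern outside the modified block is undisturbed. Once this is in hand, iterating the shortcut eliminates every coordinate extremum and the surviving path in $S$ is a monotone NE staircase from $p$ to $q$, as required.
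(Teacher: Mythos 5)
The paper offers no proof of this lemma to compare against: it is imported verbatim from Magazanik and Perles \cite{scs}, so your argument is, in effect, a self-contained replacement for a cited result rather than an alternative to an internal proof. As such, it is correct. The ``only if'' half is fine: for an axis-parallel chord $pq$ the consistently oriented vertical edges of a staircase have signed lengths of one sign summing to $q_2-p_2=0$, so the staircase is a monotone horizontal path covering $pq$. In the converse, the local shortcut is sound: in either case the new horizontal edge is a subsegment of the original rightward or leftward edge, the new vertical edge has both endpoints on $P\subset S$ and so lies in $S$ by orthogonal convexity, and the edge count strictly drops. One remark: the issue you single out as the ``main obstacle'' is not actually needed. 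You do not have to show that an $x$-shortcut creates no new $y$-extremum; since every shortcut (for an $x$- or a $y$-extremum alike) strictly decreases the number of edges, the process terminates, and a path admitting no shortcut has no coordinate extremum of either type, hence is weakly monotone in both coordinates, hence a staircase. (Your preservation claim is in fact true --- the block's entry and exit heights are unchanged and its interior $y$-behaviour stays monotone --- but it is superfluous.) Do also record the degenerate peak with no vertical edge at $x=x^*$, i.e.\ $y^*=y^{**}$, where two opposite horizontal edges collapse to one subedge; this is implicitly covered by your merging step but worth stating, as is the observation that if the path is not $x$-monotone then two consecutive horizontal edges of opposite orientation exist, which is what guarantees your three-edge block has the claimed form.
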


\begin{thm}\label{thm20}
If $S\subset \mathds{R}^2$ is staircase connected, then $\complement S$ has no bounded connected component.
\end{thm}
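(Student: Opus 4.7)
The plan is to argue by contradiction, using only the orthogonal convexity of $S$ that is furnished by Lemma \ref{lem0}. Suppose $\complement S$ has a bounded connected component $U$, and fix a point $u\in U$. I will examine the horizontal line $X_u$ and split on whether $X_u\cap S$ meets both sides of $u$.

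In the first branch, if there exist $p,q\in X_u\cap S$ with $p$ strictly to the left of $u$ and $q$ strictly to the right, then $u\in pq$ and orthogonal convexity forces $pq\subset S$, so $u\in S$, contradicting $u\in\complement S$.

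In the complementary branch, one of the two closed rays of $X_u$ at $u$, say the right ray $R$, contains no point of $S$: its open part avoids $S$ by the negation of the first branch, and $u$ itself lies in $\complement S$. Hence $R\subset\complement S$. Since $R$ is connected and contains $u\in U$, it is contained in the connected component $U$. But $R$ is unbounded, contradicting the boundedness of $U$.

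I do not anticipate any serious obstacle: the dichotomy is clean and each branch closes in one step. The mildly delicate point is the routine topological observation that a connected subset of $\complement S$ meeting $U$ lies inside $U$, which is immediate from the definition of a connected component. Note that the orthogonal-connectedness half of Lemma \ref{lem0} is not needed; orthogonal convexity of $S$ alone suffices, and the argument could equally well be run along the vertical line $Y_u$.
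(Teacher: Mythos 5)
Your proposal is correct and follows essentially the same route as the paper: the paper also works along the horizontal line through a point of the bounded component, uses boundedness to produce points of $S$ on both sides (your second branch is precisely the justification of that step, which the paper only asserts), and derives the contradiction from the orthogonal convexity given by Lemma \ref{lem0}. The only difference is presentational --- you phrase it as a dichotomy inside a contradiction argument, while the paper contradicts orthogonal convexity directly --- so nothing further is needed.
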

\begin{proof}
Suppose that $\complement S$ has a bounded component $C$. For a point $a\in C$, let $D=X_a\cap S$.
 We have two distinct points $b$ and $c$ in $D$, such that $a\in bc$.
The boundedness of $C$ guarantees the existence of the points $b$ and $c$.
Since $bc \not\subset S $, $S$ is not orthogonally convex and therefore not staircase connected, by Lemma \ref{lem0}.
\end{proof}

\begin{thm}\label{thm21}
The set $S\subset \mathds{R}^2$ with connected interior, satisfying $S= \mathrm{cl~int} S$, is staircase connected, if and only if $S$ is orthogonally convex and, for no point $x\in S$, $\mathrm{hw}_x( S ) = \mathrm{vw}_x( S ) = 0$.\
\end{thm}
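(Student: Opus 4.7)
The plan is to apply Lemma~\ref{lem0}, which reduces staircase connectedness to orthogonal connectedness together with orthogonal convexity; since orthogonal convexity appears explicitly on both sides of the theorem, the substantive content is that, under orthogonal convexity, orthogonal connectedness is equivalent to the width condition. The forward implication is direct: if some $x\in S$ had $\mathrm{hw}_x(S)=\mathrm{vw}_x(S)=0$, orthogonal convexity would force $S\cap X_x=S\cap Y_x=\{x\}$, so no nontrivial axis-parallel segment in $S$ can start at $x$; since $S=\mathrm{cl~int} S$ with $\mathrm{int} S$ nonempty and connected, $S$ contains points other than $x$, contradicting orthogonal connectedness.

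For the converse, by Lemma~\ref{dulem} the open connected set $\mathrm{int} S$ is orthogonally connected, so it suffices to join every $p\in S$ to some interior point by an orthogonal path in $S$. For $p\in\mathrm{int} S$ this is trivial, so assume $p\in\mathrm{bd} S$ and, by the width condition and symmetry, assume WLOG $\mathrm{hw}_p(S)>0$. Orthogonal convexity then makes $I:=S\cap X_p=[\alpha,\beta]\times\{p_2\}$ a nondegenerate horizontal interval with $\alpha<\beta$ and $p_1\in[\alpha,\beta]$. If $I\cap\mathrm{int} S\neq\emptyset$, a single horizontal edge $[p,u]\subset I$ to a point $u\in I\cap\mathrm{int} S$ does the job. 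Otherwise $I\subset\mathrm{bd} S$, and the plan is: pick $x_0\in(\alpha,\beta)$; since $(x_0,p_2)\in S=\mathrm{cl~int} S$, there is $u=(u_1,u_2)\in\mathrm{int} S$ close enough to $(x_0,p_2)$ that $u_1\in(\alpha,\beta)$, and $u\notin I$ (as $\mathrm{int} S\cap I=\emptyset$) forces $u_2\neq p_2$. Then $(u_1,p_2)\in I\subset S$, and orthogonal convexity places both $[p,(u_1,p_2)]\subset I$ and $[(u_1,p_2),u]\subset S\cap Y_{u_1}$ in $S$, producing the two-edge orthogonal path $p\to(u_1,p_2)\to u$ from $p$ into $\mathrm{int} S$.

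The main obstacle is the subcase $I\subset\mathrm{bd} S$: for the two-edge path to lie in $S$, the corner $(u_1,p_2)$ must be in $I$, which forces $u_1\in[\alpha,\beta]$. This is exactly why $x_0$ must be chosen in the \emph{open} interior of $I$, so that interior approximations $u$ close enough to $(x_0,p_2)$ are automatically in the strip $\{\alpha<x<\beta\}$; the hypothesis $\mathrm{hw}_p(S)>0$, i.e.\ $\alpha<\beta$, is what makes such an $x_0$ available. Once boundary-to-interior paths $p\to p^{*}$ and $q\to q^{*}$ have been constructed inside $S$, Lemma~\ref{dulem} joins $p^{*},q^{*}\in\mathrm{int} S$ by an orthogonal path in $\mathrm{int} S$; concatenation yields an orthogonal path from $p$ to $q$ in $S$, and Lemma~\ref{lem0} then delivers staircase connectedness.
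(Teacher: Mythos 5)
Your proof is correct, but it takes a genuinely different route from the paper's. The paper proves the sufficiency by building a staircase directly: starting at $a$ it greedily moves toward $b$ horizontally or vertically, uses a separation argument (via $S=\mathrm{cl~int}S$ and the connectedness of $\mathrm{int}S$) to show that progress toward $b$ is always possible, and then rules out an infinite sequence of turning points $a_1,a_2,\dots$ by passing to a limit point $c$ and contradicting the width condition there --- a step that invokes compactness of $S$. You instead reduce everything to orthogonal connectedness through Lemma~\ref{lem0}: Lemma~\ref{dulem} takes care of $\mathrm{int}S$, and you attach each boundary point $p$ to the interior by an explicit one- or two-edge orthogonal path, using orthogonal convexity to keep the corner $(u_1,p_2)$ and the vertical edge inside $S$, and $S=\mathrm{cl~int}S$ to find interior points near the relative interior of the horizontal component $I$ when $I\subset\mathrm{bd}S$. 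Your route is shorter, avoids the delicate limiting argument (and the compactness of $S$ that the paper quietly uses there although it is not among the stated hypotheses), and mirrors the boundary-to-interior technique the paper itself employs in Theorem~\ref{orccon2}; what it gives up is the explicit monotone staircase, since you obtain staircase connectedness only through the Magazanik--Perles characterization rather than by exhibiting the staircase from $a$ to $b$. One cosmetic point: writing $I=[\alpha,\beta]\times\{p_2\}$ tacitly assumes this component is bounded; if $S$ is unbounded it may be a half-line or a full line, but your argument goes through verbatim by choosing $x_0$ in its relative interior.
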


\begin{proof}
Suppose that $S\subset \mathds{R}^2$ satisfying the initial conditions of the theorem is staircase connected.
Then $S$ is orthogonally connected and orthogonally convex, by Lemma \ref{lem0}.
Therefore, for any point $x\in S$ and any other point $y\in S$, there is an orthogonal path in $S$ connecting $x$ and $y$. So, $\mathrm{hw}_{x}(S)\neq0 $ or $ \mathrm{vw}_{x}(S)\neq0$.

Now, let us prove the inverse implication.
Let $a,b\in S$ and suppose without loss of generality that the coordinates of $b$ are not smaller than those of $a$. We start at $a$ an orthogonal path $P$. It can be started, because $\mathrm{hw}_a( S ) \neq{0} $ or $\mathrm{vw}_a( S ) \neq{0} $.
We wish to come closer to $b$, horizontally or vertically.
If this is impossible, then there exist a horizontal line-segment $au$ and a vertical one $av$, with $au\backslash\left\{a\right\}$ and $av\backslash\left\{a\right\}$ disjoint from $S$, such that $b\in \mathrm{conv}([au\rangle \cup [av\rangle)$. See Figure~\ref{figau}. Since $S$ is orthogonally convex, $([au\rangle \cup [av\rangle) \cap S=\left\{a\right\}$.
\begin{figure}[htbp]
\centering
\includegraphics[width=0.35\textwidth]{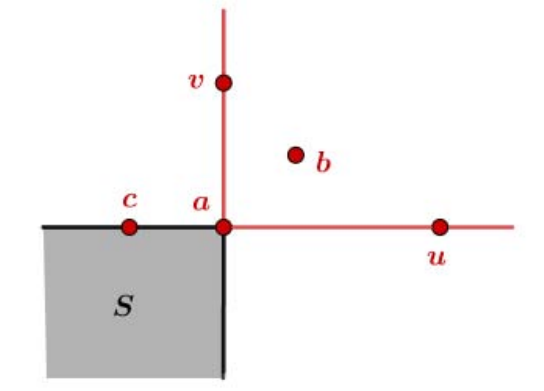}
\caption{$b\in \mathrm{conv}([au\rangle \cup [av\rangle)$.}
   \label{figau}           
\end{figure}
But there exists a horizontal or vertical line-segment $ac\subset S$. Since $S=\mathrm{cl~int}S$, there exist points of $\mathrm{int}S$ close to $b$ and points of $\mathrm{int}S$ close to $c$. Such points are separated by $[au\rangle \cup [av\rangle$,  contradicting the connectedness of $\mathrm{int}S$.
Hence we can come closer to $b$, say horizontally, until
we reach $Y_b\ni b$ or as long as we remain in $S$, up to some point $a_1$. If $a_1\in Y_b$, then $[a,a_1,b]$ is the desired staircase. See Figure~\ref{figa1}.
\begin{figure}[htbp]
\begin{minipage}[t]{0.45\linewidth}
\centering
\includegraphics[width=1.02\textwidth]{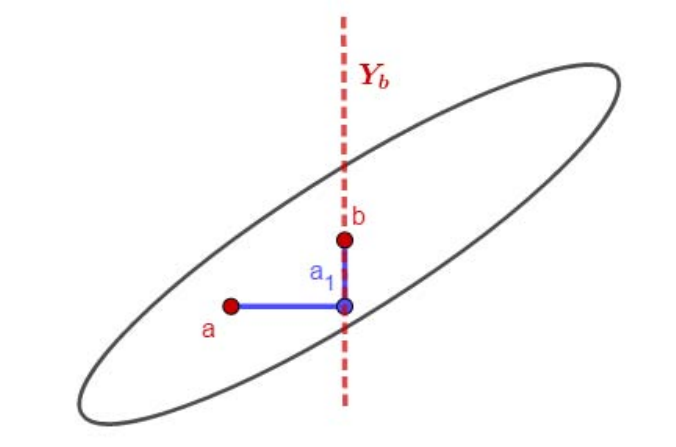}
\caption{$a_1\in Y_b$.}
   \label{figa1}           
\end{minipage}%
\hfil
\begin{minipage}[t]{0.5\linewidth}
\centering
\includegraphics[width=0.9\textwidth]{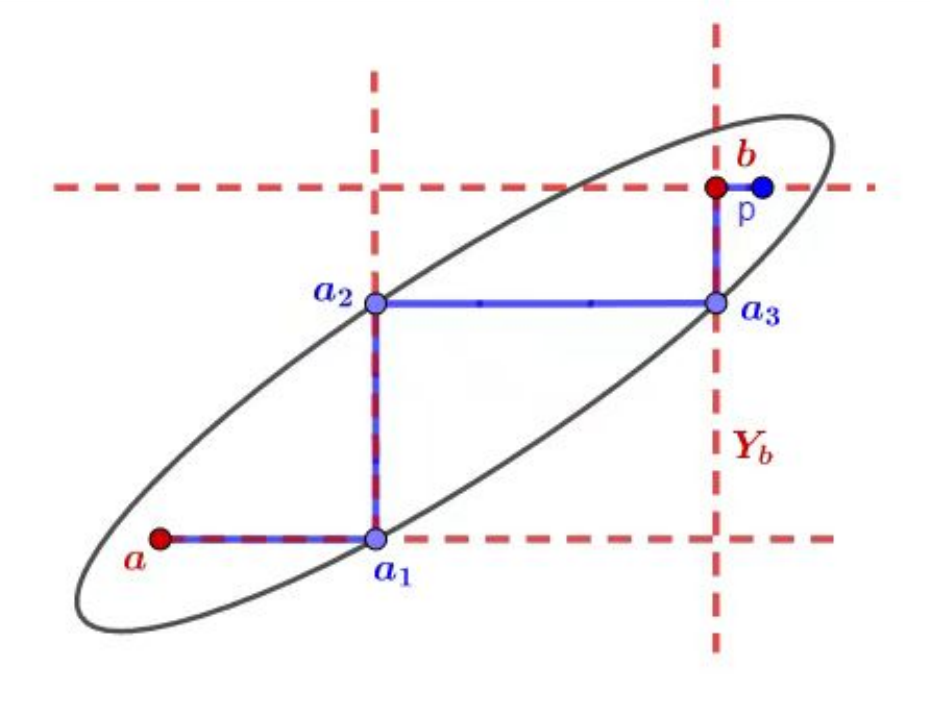}
\caption{$[a,a_1,a_2,a_3,b]$ is a staircase.}
   \label{figseq}
\end{minipage}
\end{figure}
If $a_1\in \mathrm{bd}S$, we repeat the same procedure with $a_1$ instead of $a$, and so on, until either we reach $b$, see Figure~\ref{figseq}, or we obtain a sequence of points $a_1,a_2,a_3,\cdot\cdot\cdot\in \mathrm{bd}S$, see Figure~\ref{figjh21}.
In the second case, since $S$ is compact, $\left\{a_n\right\}^\infty_{n=1}$ converges to some point $c\in S$.
Obviously, $\|a_n-a_{n+1}\|\rightarrow0$.
We have $\mathrm{hw}_c( S ) \neq{0} $ or $\mathrm{vw}_c( S ) \neq{0} $, say the first. The orthogonal convexity of $S$ together with the existence of the points $a_{2n}\in \mathrm{bd}S$ implies the non-existence of any line-segment $cs'\subset S\cap X_c$ pointing to the left. Consequently, there exists a horizontal line-segment $cs\subset S$ pointing to the right.
Similarly, there is no line-segment $cs''\subset S\cap Y_c$ pointing down.
Thus, points of $\mathrm{int} S$ close to $s$ are separated by $[cs'\rangle \cup [cs''\rangle$ from points of $\mathrm{int}S$ close to $a$.
This contradicts the hypothesis.
\end{proof}
\begin{figure}[htbp]
\centering
\includegraphics[width=0.46\textwidth]{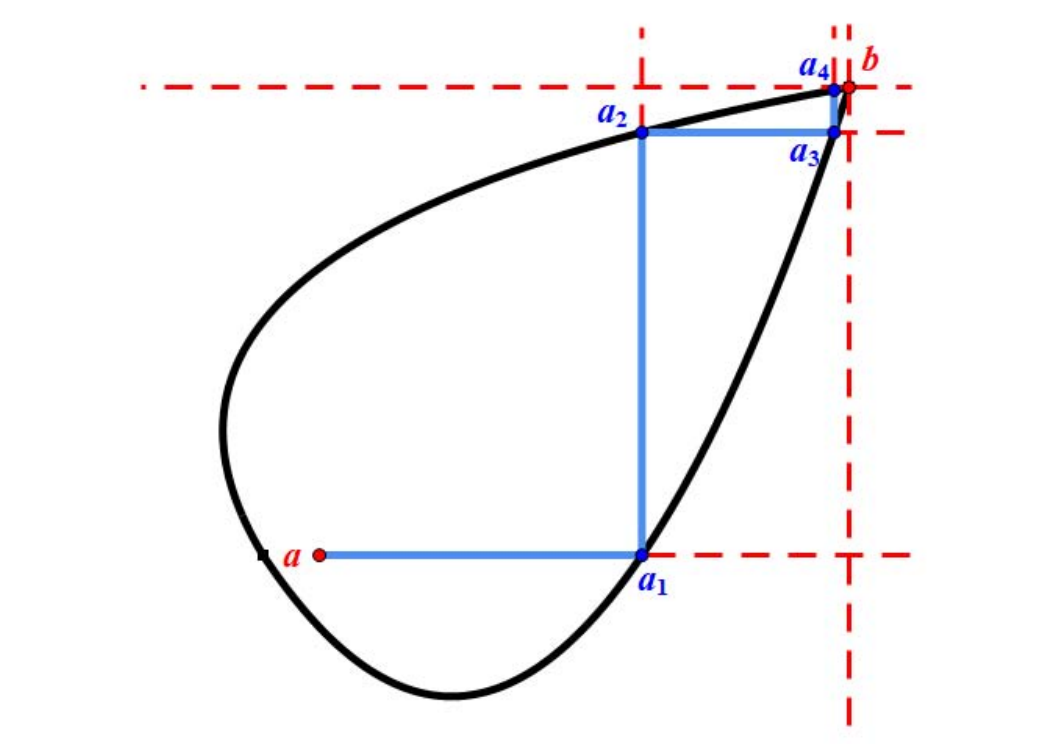}
\caption{ }  
   \label{figjh21}           
\end{figure}

Let $C\subset \mathds{R}^d$ be a continuum. A point $x\in C$ is called a $cut~point$ of $C$ if $C\setminus\left\{x\right\}$ is disconnected. In that case, the closure of any component of $C\setminus\left\{x\right\}$ will be called a \emph{piece} of $C$.
See Figure~\ref{fig4p}.
\begin{figure}[htbp]
\centering
\includegraphics[width=0.34\textwidth]{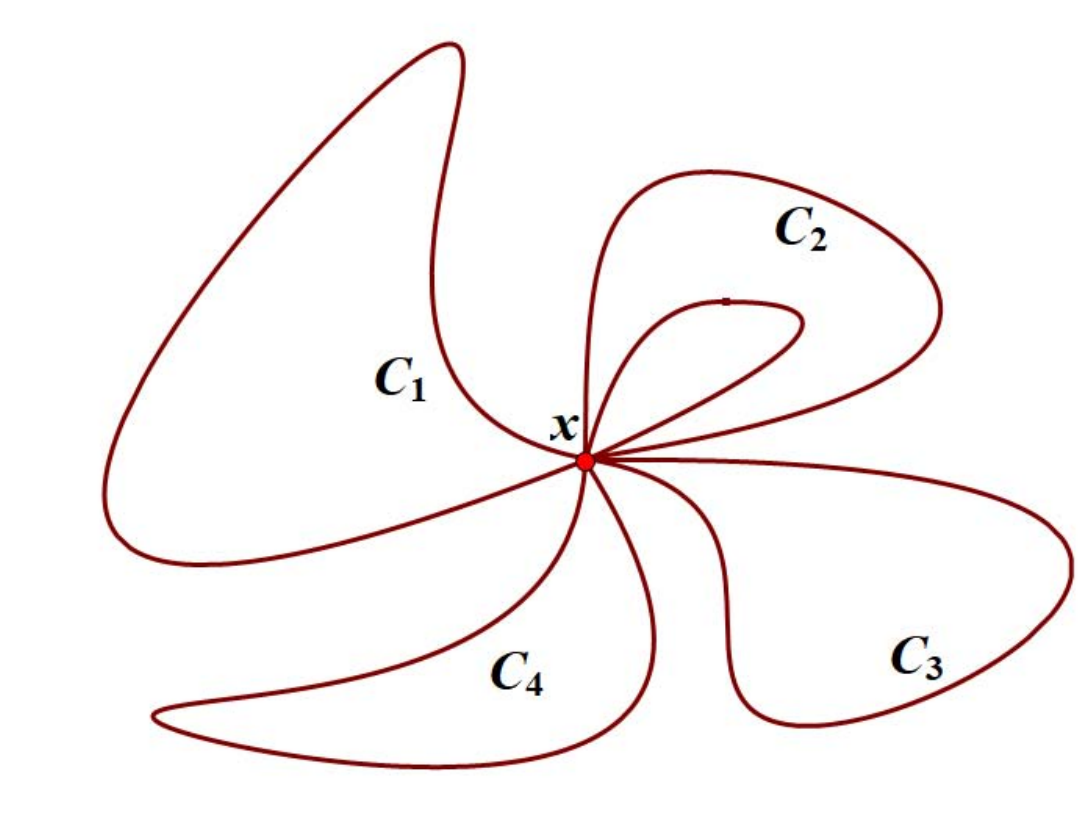}
\caption{ A cut point $x$ and  four pieces $C_1,C_2,C_3,C_4$.}
   \label{fig4p}           
\end{figure}

\begin{thm}\label{thm23}
The continuum $C\subset \mathds{R}^2$ is staircase connected, if and only if

$(1)$ it is orthogonally convex, and

$(2)$ for any point $x\in C$, $\mathrm{hw}_{x}(C)\neq0 $ or $ \mathrm{vw}_{x}(C)\neq0$, and the same holds for any piece of $C$ instead of $C$.
\end{thm}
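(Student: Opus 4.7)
My plan is as follows.

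For the forward direction, Lemma \ref{lem0} applied to $C$ immediately gives orthogonal convexity (this is (1)) and orthogonal connectedness, and the latter forces $\mathrm{hw}_x(C)\ne 0$ or $\mathrm{vw}_x(C)\ne 0$ at every $x\in C$, since otherwise no orthogonal segment could emanate from $x$. To obtain (2) for every piece $C'$, I would prove the stronger statement that each piece is itself staircase connected and then reapply the above observation inside it. Given $p,q \in C'$, where $C'$ is the closure of a component of $C\setminus\{z\}$ for some cut point $z$, any staircase $P$ in $C$ from $p$ to $q$ is monotone in both coordinates (by the common-direction rule on parallel edges) and hence a simple arc. If $P$ leaves $C'$, it must pass through $z$; splitting $P$ at $z$ produces two staircase subpaths, each connected in $C$ and meeting other pieces only at $z$, so both lie in $C'$, and their concatenation at $z$ is still a staircase.

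For the backward direction, by Lemma \ref{lem0} it suffices to prove that $C$ is orthogonally connected. I would adapt the greedy construction from the proof of Theorem \ref{thm21}: fix $a,b\in C$ with $b$ componentwise at least $a$ and try to build a staircase $a \to b$ moving right and up, using (2) to supply an available orthogonal segment at each reached point. If at some intermediate $a'$ no segment of $C$ from $a'$ heads toward $b$, then exactly as in Theorem \ref{thm21} there are a horizontal $a'u$ and a vertical $a'v$ with $a'u\setminus\{a'\}$ and $a'v\setminus\{a'\}$ disjoint from $C$ and $b\in\mathrm{conv}([a'u\rangle\cup[a'v\rangle)$, and orthogonal convexity forces $([a'u\rangle\cup[a'v\rangle)\cap C=\{a'\}$.

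The new ingredient, replacing the connected-interior hypothesis of Theorem \ref{thm21}, is the piece condition of (2). The L-shape $[a'u\rangle\cup[a'v\rangle$ separates $\mathds{R}^2$ into an inner wedge containing $b$ and an outer region containing $a$; since $C\setminus\{a'\}$ is disjoint from this L, it decomposes across the two regions, so $a'$ is a cut point of $C$ and $a,b$ belong to distinct pieces $C'_a,C'_b$. At $a'\in C'_b$, every horizontal or vertical segment of $C$ issuing from $a'$ points into $C'_a$ (away from the wedge), hence the components of $C'_b\cap X_{a'}$ and $C'_b\cap Y_{a'}$ containing $a'$ both reduce to $\{a'\}$, giving $\mathrm{hw}_{a'}(C'_b)=\mathrm{vw}_{a'}(C'_b)=0$ and contradicting (2) for $C'_b$. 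If the greedy never stalls but runs forever, compactness of $C$ yields a limit $c\in C$, and the monotone approach $a_n\to c$ from the lower-left, together with orthogonal convexity, forces the horizontal and vertical components of $C$ at $c$ to extend only rightward and upward, reducing to the same L-shape contradiction at $c$.

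I expect the main obstacle to be the cut-point step: one must check that the full rays (not just their initial subsegments) stay out of $C$ so that the L genuinely disconnects $C\setminus\{a'\}$—this does follow from orthogonal convexity but must be spelled out—and, in the limit case, rigorously extract the directional restrictions on the horizontal and vertical components at $c$ from the monotone history of the $a_n$. This is precisely where the piece condition of (2) takes over the role played by connectedness of the interior in Theorem \ref{thm21}.
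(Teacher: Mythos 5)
Your proposal is correct and takes essentially the same route as the paper: the forward direction via Lemma \ref{lem0}, and the backward direction by the greedy construction of Theorem \ref{thm21}, where both the stall case and the limit point $c$ are handled by producing a cut point whose adjacent piece (the one containing $b$, respectively containing $a$, or $C$ itself if $c$ is not a cut point) violates condition (2). Your explicit splitting-at-the-cut-point argument showing that pieces inherit staircase connectedness is a welcome elaboration of the paper's ``analogously,'' and the details you flag (the degenerate case $a'=a$, and the directional restrictions at $c$) are exactly the points the paper also treats only briefly.
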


\begin{proof}
Suppose $C\subset \mathds{R}^2$ is staircase connected.
Then $C$ is orthogonally connected and orthogonally convex, by Lemma \ref{lem0}.
Consequently, for any point $x\in C$ and any other point $y\in C$, there is an orthogonal path in $C$ joining $x$ and $y$. Hence, $\mathrm{hw}_{x}(C)\neq0 $ or $ \mathrm{vw}_{x}(C)\neq0$. Analogously, the same holds for each piece.

Next, we prove the sufficiency. Let $a,b$ be any two points of $C$.

Suppose without loss of generality that the coordinates of $b$ are not smaller than those of $a$. We start at $a$ an orthogonal path $P$. It can be started, because $\mathrm{hw}_a( C ) \neq{0} $ or $\mathrm{vw}_a( C ) \neq{0} $.
We wish to come closer to $b$, horizontally or vertically.
If this is impossible, then there exist a horizontal line-segment $au$ and a vertical one $av$, with $au\backslash\left\{a\right\}$ and $av\backslash\left\{a\right\}$ disjoint from $C$, such that $b\in \mathrm{conv}([au\rangle \cup [av\rangle)$. See Figure~\ref{figjhau}. Since $S$ is orthogonally convex, $([au\rangle \cup [av\rangle) \cap C=\left\{a\right\}$. Without loss of generality, let $\mathrm{hw}_a( C ) \neq{0} $ and $am\subset C$, where $m$, $a$,  and $u$ are collinear.
Thus, $[au\rangle \cup [av\rangle$ separates $b$ from the point $m$.
This implies that $a$ is a cut point of $C$. But the piece determined by $a$ containing $b$ does not fulfil the requirement (2) for pieces.
\begin{figure}[htbp]
\begin{minipage}[t]{0.4\linewidth}
\centering
\includegraphics[width=0.95\textwidth]{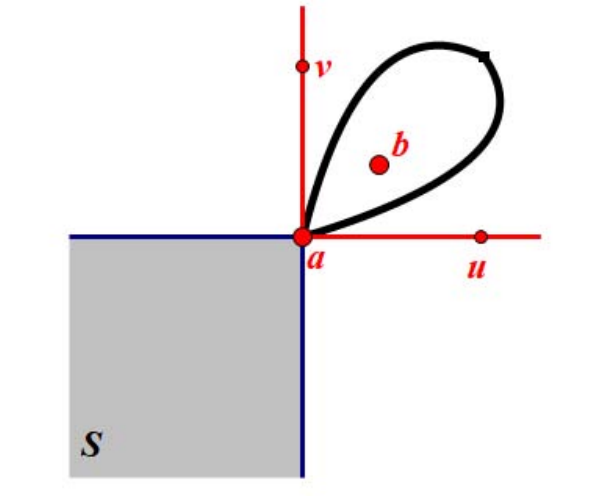}
\caption{$b\in \mathrm{conv}([au\rangle \cup [av\rangle)$.}
   \label{figjhau}           
\end{minipage}%
\hfil
\begin{minipage}[t]{0.4\linewidth}
\centering
\includegraphics[width=1\textwidth]{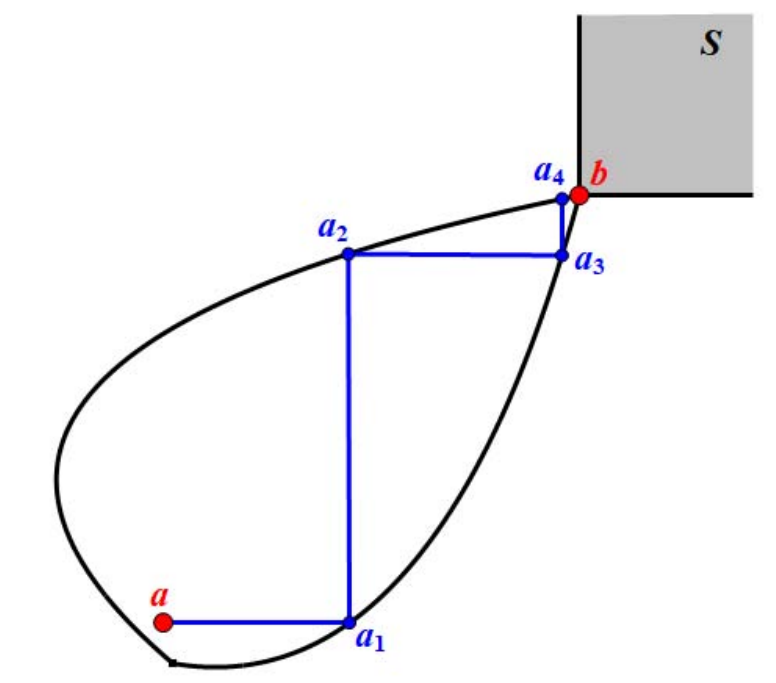}
\caption{$c$ is a cut point of $C$.}
   \label{figjh22}
\end{minipage}
\end{figure}

Hence, starting in  $a$,  we can come closer to $b$, say horizontally, until
we reach the vertical line $Y_b$ or as long as we remain in $C$, up to some point $a_1$. If $a_1\in Y_b$, then $[a,a_1,b]$ is the desired staircase.
If $a_1\in (\mathrm{bd}C)\backslash L$, we repeat the same procedure with $a_1$ instead of $a$, and so on, until either we reach $b$, or we obtain a sequence of points $a_1,a_2,a_3,\cdot\cdot\cdot\in \mathrm{bd}C$.
In the second case, since $C$ is compact, $\left\{a_n\right\}^\infty_{n=1}$ converges to some point $c\in C$. Obviously, $\|a_n-a_{n+1}\|\rightarrow0$.
If $c$ is not a cut point of $C$, then the orthogonal convexity of $C$ yields $\mathrm{hw}_c( C ) = \mathrm{vw}_c( C )={0} $, contradicting the hypothesis. In case $c$ is a cut point of $C$, it only follows that $\mathrm{hw}_c( P ) = \mathrm{vw}_c( P )={0} $ for the piece $P\ni a$ determined by $c$, again contradicting the requirement (2) for pieces. See Figure~\ref{figjh22}.
\end{proof}

\section{Staircase connectedness and convex bodies}

Let $K\subset \mathds{R}^2$ be a convex body and $x\in \mathrm{bd}K$. Let
\begin{align}
  T_x(K)=\bigcup\limits_{ y\in K \backslash\left\{ x\right\} }  [xy\rangle
\end{align}
and
\begin{align}
  \alpha_x(K)=\mu_1\left\{\frac{y-x}{\|y-x\|}:y\in K \backslash \left\{x\right\}\right\}
\end{align}

We say that $K$ is $obtuse$, if, for any $x\in \mathrm{bd}K$, either $\alpha_x(K)> \pi/2$ or, simultaneously, $\alpha_x(K)=\pi/2$ and $T_x(K)\backslash \left\{x\right\}$ is not open.
\begin{thm}\label{thmobtuse}
Each obtuse planar convex body is staircase connected.
\end{thm}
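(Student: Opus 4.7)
The plan is to invoke Theorem~\ref{thm21} directly on $K$. A planar convex body has connected interior, equals $\mathrm{cl}\,\mathrm{int}\,K$, and its convexity immediately forces orthogonal convexity. So all that remains is to verify, for each $x\in K$, that $\mathrm{hw}_x(K)\neq 0$ or $\mathrm{vw}_x(K)\neq 0$. Interior points are trivial; the entire argument therefore concentrates on boundary points, where the obtuseness hypothesis enters.

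For $x\in\mathrm{bd}\,K$, the convexity of $K$ makes $\mathrm{hw}_x(K)>0$ (resp.\ $\mathrm{vw}_x(K)>0$) equivalent to $T_x(K)$ containing a horizontal (resp.\ vertical) ray emanating from $x$. Since $K$ is convex, the set of directions realised by $T_x(K)$ is an arc on the unit circle of length $\alpha_x(K)$, and this arc is open in the circle iff $T_x(K)\setminus\{x\}$ is open in $\mathds{R}^2$. Thus the obtuseness hypothesis translates into: either the direction arc has length $>\pi/2$, or it has length exactly $\pi/2$ and contains at least one of its two endpoints. The task reduces to showing that in either scenario the arc must include an axis-parallel direction, either in its interior or as an included endpoint.

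I would split by case. If $\alpha_x(K)>\pi/2$, note that the four coordinate directions partition the unit circle into four closed arcs of length $\pi/2$; an arc of length strictly greater than $\pi/2$ cannot be contained in any of these, so it must contain a coordinate direction in its interior, giving an axis-parallel segment from $x$ inside $K$. If $\alpha_x(K)=\pi/2$ and the endpoint-inclusion condition holds, then either the open interior of the arc already contains a coordinate direction (and we are done), or the arc coincides with one of the four closed quadrant-arcs delimited by consecutive coordinate directions; in that case both bounding directions are axis-parallel, and the non-openness hypothesis supplies one of them as an actual direction realised in $T_x(K)$.

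In every case $\mathrm{hw}_x(K)>0$ or $\mathrm{vw}_x(K)>0$, so Theorem~\ref{thm21} concludes that $K$ is staircase connected. The main obstacle, and the reason the definition of obtuse is phrased as it is, is pinning down the borderline $\alpha_x(K)=\pi/2$: this is precisely the angular aperture at which an axis-aligned \emph{open} quadrant could serve as the tangent cone, isolating $x$ from both horizontal and vertical segments of $K$; the ``not open'' clause is engineered exactly to rule out that single pathology.
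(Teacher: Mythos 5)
Your proposal is correct and follows essentially the same route as the paper: reduce to Theorem~\ref{thm21} via the convexity of $K$ (hence orthogonal convexity, $K=\mathrm{cl\,int}K$, connected interior) and show that obtuseness forces $\mathrm{hw}_x(K)\neq 0$ or $\mathrm{vw}_x(K)\neq 0$ at every boundary point. Your explicit dichotomy in the $\alpha_x(K)=\pi/2$ case (axis-aligned quadrant cone versus not) is just a more carefully spelled-out version of the paper's terser argument about $\mathrm{bd}\,T_a(C)$ containing an axis-parallel segment.
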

\begin{proof}Let $C$ be an obtuse planar convex body. Being convex,  $C$ is orthogonally convex.
For any $a\in \mathrm{bd}C$, if $\alpha_a(C)>\pi/2$, at least one of $\mathrm{hw}_a(C)$ and $\mathrm{vw}_a(C)$ is non-zero.
If $\alpha_a(C)=\pi/2$ and $T_a(C)\backslash \left\{a\right\}$ is not open,
then $\mathrm{bd} T_a(C)$ includes a non-degenerate line-segment, whence $\mathrm{cl}T_a(C)$ includes a horizontal or vertical such line-segment, and therefore at least one of $\mathrm{hw}_a(C)$ and $\mathrm{vw}_a(C)$ is non-zero.
For any $a\in \mathrm{int}C$, it is obvious that $\mathrm{hw}_a(C)$ and $\mathrm{vw}_a(C)$ are non-zero.
It follows from Theorem \ref{thm21} that $C$ is staircase connected.
\end{proof}

For a convex body $K\subset \mathds{R}^2$, we call $a\in \mathrm{bd}K$ a $non$-$obtuse~vertex$, if $\alpha_a(K)\leq\pi/2$.
\begin{thm}\label{thmacute}
Suppose $a$ and $b$ are non-obtuse vertices of the convex body $K\subset \mathds{R}^2$. If some coordinate axis is parallel to $\overline{ab}$, then $K$ is staircase connected.
\end{thm}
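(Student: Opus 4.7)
The plan is to reduce to Theorem~\ref{thm21}. A convex body $K$ has connected interior, satisfies $K=\mathrm{cl\,int}\,K$, and is orthogonally convex, so only the width hypothesis of Theorem~\ref{thm21} remains to be checked; it will therefore suffice to show that no point $c\in K$ has $\mathrm{hw}_c(K)=\mathrm{vw}_c(K)=0$.

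Assume without loss of generality that $\overline{ab}$ is horizontal, with $a=(a_1,0)$, $b=(b_1,0)$ and $a_1<b_1$. The key step will be to show that $K$ is contained in the vertical strip $[a_1,b_1]\times\mathds{R}$. I plan to argue this as follows: by convexity of $K$, the set of directions from $a$ to points of $K\setminus\{a\}$ is a convex subset of the unit circle, hence a circular arc of length $\alpha_a(K)\leq\pi/2$; since $b\in K$, this arc contains the horizontal direction $(1,0)$, so it lies inside $\{(\cos\theta,\sin\theta):\theta\in[-\pi/2,\pi/2]\}$, forcing every $c\in K$ to satisfy $c_x\geq a_1$. The symmetric argument at $b$ will give $c_x\leq b_1$.

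Once the strip containment is in hand, the width verification is immediate. For $c\in K$ with $c_y=0$, convexity places $c$ inside $[a,b]\subset K$, so $\mathrm{hw}_c(K)\geq b_1-a_1>0$. For $c\in K$ with $c_y\neq 0$, the point $(c_x,0)$ lies on $[a,b]\subset K$ thanks to the strip containment, so by convexity the vertical segment joining $c$ to $(c_x,0)$ is contained in $K$, yielding $\mathrm{vw}_c(K)\geq|c_y|>0$. Theorem~\ref{thm21} will then conclude that $K$ is staircase connected.

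The only delicate step is the strip containment. It hinges on correctly translating $\alpha_a(K)\leq\pi/2$ into a wedge constraint, using both the convexity of the angular trace of $K$ at $a$ (to know the trace is an arc rather than an arbitrary subset of the circle) and the alignment of $\overline{ab}$ with a coordinate axis, which is what guarantees that the horizontal direction actually sits inside that wedge. Everything else is bookkeeping.
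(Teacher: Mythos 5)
Your proposal is correct and follows essentially the same route as the paper: both deduce from the non-obtuseness of $a$ and $b$ that $K$ lies in the vertical strip bounded by $Y_a$ and $Y_b$, note that $ab\subset K$ gives positive horizontal width at $a$ and $b$ and positive vertical width at every other point of $K$, and then conclude via orthogonal convexity and Theorem~\ref{thm21}. The only difference is that you spell out the angular-arc argument behind the strip containment, which the paper simply asserts.
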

\begin{proof}
Suppose that $\overline{ab}$ is parallel to the $x$-axis. Then, $ab\subset K$ and both $\mathrm{hw}_a(K)$ and $\mathrm{hw}_b(K)$ are positive. Also, $K$ is contained in a strip bounded by $Y_a$ and $Y_b$.
It is obvious that, for any point $q\in K\backslash\left\{a,b \right\}$, $\mathrm{vw}_q(K)\neq 0$.
In addition, $K$ is orthogonally convex, being convex.
Therefore, $K$ is staircase connected, by Theorem \ref{thm21}.
\end{proof}

\begin{thm}\label{thmrotate}
In $ \mathds{R}^2 $, every convex body can be rotated to become staircase connected.
\end{thm}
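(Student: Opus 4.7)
The plan is to split into cases by the number of non-obtuse vertices of $K$ (boundary points $a$ with $\alpha_a(K)\le\pi/2$) and then invoke Theorems \ref{thm21}, \ref{thmobtuse}, and \ref{thmacute} accordingly. First I would note that $K$ has at most four non-obtuse vertices, since at each such vertex the exterior angle $\pi-\alpha_a(K)$ is at least $\pi/2$ and the exterior angles along $\mathrm{bd}K$ sum to $2\pi$.

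If $K$ has at least two non-obtuse vertices $a$ and $b$, I would rotate $K$ so that $\overline{ab}$ is parallel to the $x$-axis; Theorem \ref{thmacute} then immediately yields staircase connectedness of the rotated body. Otherwise $K$ has at most one non-obtuse vertex, and I aim to apply Theorem \ref{thm21}: convexity gives orthogonal convexity, connected interior, and $K=\mathrm{cl~int}\,K$, so I only need to choose a rotation after which $\mathrm{hw}_x(K)>0$ or $\mathrm{vw}_x(K)>0$ at every boundary point $x$. The key observation is that whenever $\alpha_x(K)>\pi/2$, the arc of realized directions at $x$ has length strictly exceeding $\pi/2$, and since the four coordinate directions are $\pi/2$-equispaced on the unit circle, this arc must meet at least one of them under \emph{every} rotation. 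Hence the obtuse boundary points pose no constraint. If $K$ has no non-obtuse vertex at all, this already finishes the argument (equivalently, $K$ is obtuse and Theorem \ref{thmobtuse} applies with no rotation needed). If $K$ has exactly one non-obtuse vertex $a$, I rotate so that one coordinate direction lies in the realized cone at $a$; since $\alpha_a(K)>0$ ($K$ has nonempty interior), such a rotation is available.

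The most delicate point I anticipate is the borderline case $\alpha_a(K)=\pi/2$ with $T_a(K)\setminus\{a\}$ open: there the realized cone at $a$ is an open quarter-circle, and the chosen coordinate direction has to land strictly in its interior rather than on an omitted bounding ray. Avoiding the single bad alignment modulo $\pi/2$ yields an open, nonempty set of admissible rotations, so this is straightforward once noticed. Beyond taking care of this nuance, the proof should be a clean two-case combination of Theorems \ref{thmacute} and \ref{thm21}.
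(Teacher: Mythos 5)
Your proposal is correct and takes essentially the same approach as the paper: a case split on the non-obtuse vertices, applying Theorem \ref{thmacute} after rotating the chord through two of them, and otherwise rotating so that a realized direction at the (at most one) non-obtuse vertex becomes axis-parallel and concluding via Theorems \ref{thmobtuse} and \ref{thm21}. Your extra remarks (at most four non-obtuse vertices, the borderline case $\alpha_a(K)=\pi/2$ with open cone) are harmless refinements of what the paper handles implicitly by rotating an actual chord $ab$, whose direction is automatically realized.
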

\begin{proof}Let $D$ be a convex body in $\mathds{R}^2$.
Since $D$ is convex, any  set obtained by rotating $D$ is still convex and therefore orthogonally convex.

If $D$ is obtuse, $D$ is staircase connected, by Theorem \ref{thmobtuse}.
Otherwise, choose $a,b\in \mathrm{bd}D$ such that $a$ is a non-obtuse vertex, and such that $b$ is non-obtuse if there exists a second such vertex.
We rotate the set $D$ to a position $D'$ to bring $ab$ parallel to some coordinate axis.
If  both $a$ and $b$ are non-obtuse, use Theorem \ref{thmacute}.
Otherwise, one of $\mathrm{hw}_x(D')$ and $\mathrm{vw}_x(D')$ is not zero for any point $x\in \mathrm{bd}D'$, and the conclusion follows from Theorem \ref{thm21}.
\end{proof}

Since the union of two orthogonally connected sets with non-empty intersection is orthogonally connected, it is easy to derive the following.
\begin{thm}\label{thmum}
Any connected union of finitely many obtuse convex bodies in $\mathds{R}^2$ is orthogonally connected.
\end{thm}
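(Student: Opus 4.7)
The plan is to reduce the statement to Theorem \ref{thmobtuse} together with the (parenthetical) fact that the union of two orthogonally connected sets with non-empty intersection is orthogonally connected. Let $\mathcal{K}=\{K_1,\ldots,K_n\}$ be a finite family of obtuse planar convex bodies whose union $U=\bigcup_{i=1}^n K_i$ is connected. By Theorem \ref{thmobtuse}, each $K_i$ is staircase connected, hence in particular orthogonally connected.

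Next I would introduce the intersection graph $G$ of $\mathcal{K}$: vertices $1,\ldots,n$, with an edge $ij$ whenever $K_i\cap K_j\neq\emptyset$. The first (and only non-routine) step is to check that $G$ is connected. For this, suppose the vertex set splits as $I\sqcup J$ with no edges between. Then $A=\bigcup_{i\in I}K_i$ and $B=\bigcup_{j\in J}K_j$ are disjoint; since each $K_i$ is compact and both unions are finite, $A$ and $B$ are closed, forcing a separation of $U$, contradicting its connectedness. So $G$ is connected.

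Now take any two points $p,q\in U$, say $p\in K_{i_1}$ and $q\in K_{i_m}$, and pick a path $K_{i_1},K_{i_2},\ldots,K_{i_m}$ in $G$; choose a point $p_\ell\in K_{i_\ell}\cap K_{i_{\ell+1}}$ for $\ell=1,\ldots,m-1$, and set $p_0=p$, $p_m=q$. Since $K_{i_\ell}$ is orthogonally connected and contains both $p_{\ell-1}$ and $p_\ell$, there is an orthogonal path $Q_\ell\subset K_{i_\ell}\subset U$ from $p_{\ell-1}$ to $p_\ell$. Concatenating $Q_1,\ldots,Q_m$ gives an orthogonal path from $p$ to $q$ inside $U$, which proves orthogonal connectedness.

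The main (and really the only) subtlety is the reduction of topological connectedness of the union to combinatorial connectedness of the intersection graph; this rests on the finiteness of $\mathcal{K}$ and on the closedness of the $K_i$. Everything else is stitching, using Theorem \ref{thmobtuse} body by body. Note that finiteness is essential here, as witnessed by the infinite example accompanying Theorem \ref{orccon}, so no sharper statement is to be expected from this argument.
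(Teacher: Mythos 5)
Your proof is correct and follows the same route the paper intends: the paper states this theorem without a detailed proof, only remarking that it follows from Theorem \ref{thmobtuse} together with the fact that a union of two orthogonally connected sets with a common point is orthogonally connected, and your argument (obtuse bodies are staircase, hence orthogonally, connected; connectedness of the union forces connectedness of the intersection graph by the closedness and finiteness of the bodies; then chain orthogonal paths through intersection points) is exactly the fleshed-out version of that remark. No gaps; your observation that finiteness is essential, as the infinite example after Theorem \ref{orccon} shows, is also consistent with the paper.
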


\begin{thm}\label{thmu2st}
The convex hull of a union of finitely many obtuse convex bodies in $\mathds{R}^2$ is staircase connected.
\end{thm}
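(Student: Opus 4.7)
The plan is to show that $K = \mathrm{conv}\bigl(\bigcup_{i=1}^{n} K_i\bigr)$ is itself an obtuse convex body; Theorem~\ref{thmobtuse} then immediately delivers staircase connectedness. Being the convex hull of finitely many convex bodies, $K$ is convex (hence orthogonally convex) and a convex body in its own right, so what remains is to verify the obtuseness condition at every $x \in \mathrm{bd}K$.

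I would split the verification into two cases. If $x$ is not an extreme point of $K$, then $x$ lies in the relative interior of an edge of $K$; the tangent cone $T_x(K)$ is a closed half-plane, so $\alpha_x(K) = \pi > \pi/2$ and the condition holds trivially. If instead $x$ is an extreme point of $K$, I would use the standard fact that any extreme point of $\mathrm{conv}(K_1 \cup \cdots \cup K_n)$ is an extreme point of some $K_i$, whence $x \in \mathrm{bd}K_i$. Because $K_i \subseteq K$ one has $T_x(K_i) \subseteq T_x(K)$ and consequently $\alpha_x(K_i) \le \alpha_x(K)$. The obtuseness of $K_i$ at $x$ would then transfer to $K$: if $\alpha_x(K_i) > \pi/2$ we obtain $\alpha_x(K) > \pi/2$ at once; if $\alpha_x(K_i) = \pi/2$ and $T_x(K_i) \setminus \{x\}$ is not open, then $T_x(K_i)$ contains at least one of its two bounding rays, that ray lies in $T_x(K)$ as well, and hence either $\alpha_x(K) > \pi/2$, or $\alpha_x(K) = \pi/2$ and $T_x(K) \setminus \{x\}$ is also not open.

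Taken together, these two cases prove that $K$ is obtuse, after which Theorem~\ref{thmobtuse} completes the proof. The main obstacle I anticipate is the borderline case $\alpha_x(K_i) = \alpha_x(K) = \pi/2$: here one must argue that two convex cones of equal angular measure $\pi/2$, one containing the other, necessarily share the same closure, so that the ``non-open'' boundary ray inherited from $T_x(K_i)$ indeed survives in $T_x(K)$ and prevents $T_x(K) \setminus \{x\}$ from becoming open. This is the only place where the detailed geometry of tangent cones has to be pushed carefully; the rest of the argument is structural.
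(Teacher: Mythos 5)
Your proposal is correct and takes essentially the same route as the paper: both arguments show that $K=\mathrm{conv}\bigl(\bigcup_i K_i\bigr)$ is itself an obtuse convex body and then invoke Theorem~\ref{thmobtuse}, the paper splitting $\mathrm{bd}K$ into points lying on some $\mathrm{bd}K_i$ versus points where $\alpha_x(K)=\pi$, which matches your extreme/non-extreme dichotomy. Your careful handling of the borderline case $\alpha_x(K_i)=\alpha_x(K)=\pi/2$ (nested cones of equal angular measure, hence a shared non-open bounding ray) is precisely the step the paper compresses into ``it follows from the definition that $K$ is obtuse.''
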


\begin{proof}Let $A_1,...,A_n$ be obtuse convex bodies and $K$ the convex hull of their union.
For any $x\in \mathrm{bd}K$, either $x\in \mathrm{bd}A_i$ for some index $i$, or $\alpha_x(K)=\pi$.
It follows from the definition that $K$ is obtuse.
Thus, $K$ is staircase connected, by Theorem \ref{thmobtuse}.
\end{proof}

\section{Characterization via unimodal functions }

Let $A\subset \mathds{R}^2$ equal  the closure of its interior, which is  bounded and connected.
Let $L_x$ be the line of all points of abscissa $x$, i.e. $L_x=Y_{(x,0)}$.
There exist two numbers $a,b$ such that $L_x\cap A\neq \emptyset$ for all $x\in [a,b]$, the interval $[a,b]$ being maximal.

For any $x\in[a,b]$ define $f^+(x)=\mathrm{max}\left\{y:(x,y)\in A\right\}$ and
$f^-(x)$=$\mathrm{min}$\\$\left\{y:(x,y)\in A\right\}. $
Clearly, $f^+\geq f^-$.
Even $f^+(x)>f^-(x)$ for $x\in ]a,b[$.
The functions $f^+$ and $f^-$ are $associated$ to $A$.
If $f^+(a)=f^-(a)$ and $f^+(b)=f^-(b)$, we say that $A$ is $normal$.

A function $f:[a,b]\rightarrow \mathds{R}$ is called $unimodal$, if it is non-decreasing on a subinterval $[a,c]$ and non-increasing on $[c,b]$, for some $c\in]a,b[$.
Obviously, $f^+(a)$ and $f^-(a)$ are or are not equal.
What are the chances that they are not equal?
It is not difficult to see that $A$ can be rotated in the plane, such that $f^+(a)=f^-(a)$ and $f^+(b)=f^-(b)$.
In fact, only at most countably many rotations lead to inequality at $a$ or $b$.

Thus, the result below concentrates on the case of a normal set $A$.

\begin{thm}\label{thmunim}
Let $A\subset \mathds{R}^2$ be normal and vertically convex.
Then, $A$ is staircase connected, if and only if both $f^+$ and $-f^-$ are unimodal, where $f^+$ and $f^-$ are associated with $A$.
\end{thm}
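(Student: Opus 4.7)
The plan is to translate staircase connectedness into a slice-wise condition on the associated graphs $f^+,f^-$: for the forward direction, I would use staircases between boundary points to extract the ``no-dip'' inequality characterizing unimodality, and for the reverse direction, I would exploit unimodality to establish horizontal convexity and apply Theorem~\ref{thm21} (whose remaining hypotheses $A=\mathrm{cl\,int}A$ and connected interior come from the standing setup of this section).

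\textbf{Forward direction.} Suppose $A$ is staircase connected and fix any $x_1<x_2<x_3$ in $[a,b]$. I would pick a staircase $P\subset A$ from $(x_1,f^+(x_1))$ to $(x_3,f^+(x_3))$. Since both coordinates along $P$ are monotone, the point of $P$ at abscissa $x_2$ has ordinate $y$ lying between $f^+(x_1)$ and $f^+(x_3)$; in particular $y\ge\min(f^+(x_1),f^+(x_3))$. Since $(x_2,y)\in A$ one also has $y\le f^+(x_2)$, so $f^+(x_2)\ge\min(f^+(x_1),f^+(x_3))$. Holding for every such triple, this is precisely the characterization of $f^+$ having interval super-level sets, equivalent to unimodality. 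The symmetric argument, with a staircase between $(x_1,f^-(x_1))$ and $(x_3,f^-(x_3))$, yields $f^-(x_2)\le\max(f^-(x_1),f^-(x_3))$, so $-f^-$ is unimodal.

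\textbf{Reverse direction and main obstacle.} Assume $f^+$ and $-f^-$ are unimodal. First I would derive horizontal convexity: for every level $y_0$, the horizontal slice of $A$ at $y_0$ is $\{x:f^+(x)\ge y_0\}\cap\{x:f^-(x)\le y_0\}$, an intersection of a super-level set of the unimodal function $f^+$ with a sub-level set of $f^-$ (whose sub-level sets are intervals because $-f^-$ is unimodal), hence an interval. Together with the given vertical convexity, $A$ is orthogonally convex. To invoke Theorem~\ref{thm21} I still need that every $x\in A$ has $\mathrm{hw}_x(A)+\mathrm{vw}_x(A)>0$. Interior abscissae give $\mathrm{vw}>0$ from the strict inequality $f^+>f^-$ on $(a,b)$. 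The delicate step, which I expect to be the main obstacle, is at the two normality points $(a,f^+(a))$ and $(b,f^+(b))$: each is the unique point of $A$ on its vertical line (by normality), so the vertical width there is zero, and one must exhibit an honest horizontal segment. Letting $c_1,c_2\in(a,b)$ denote the modes of $f^+$ and $-f^-$ (guaranteed to lie strictly inside $(a,b)$ by the definition of unimodality), on $[a,\min(c_1,c_2)]$ the function $f^+$ is non-decreasing and $f^-$ is non-increasing, so $f^-(t)\le f^-(a)=f^+(a)\le f^+(t)$, placing the segment from $(a,f^+(a))$ to $(\min(c_1,c_2),f^+(a))$ inside $A$ and giving positive horizontal width; the symmetric computation handles $(b,f^+(b))$. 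Theorem~\ref{thm21} then delivers staircase connectedness.
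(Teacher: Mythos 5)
Your proposal is correct and takes essentially the same route as the paper: the sufficiency part coincides with the paper's (horizontal convexity from unimodality, positive horizontal width at the two normality points and positive vertical width at all other points, then Theorem \ref{thm21}), while your necessity argument, which reads the no-dip inequality off the monotone coordinates of a staircase, is just a hands-on version of the paper's appeal to horizontal convexity via Lemma \ref{lem0}. In fact you supply details the paper only asserts (e.g.\ why $\mathrm{hw}_u(A)$ and $\mathrm{hw}_v(A)$ are positive), and your identification of the no-dip (interval super-level set) condition with the paper's notion of unimodality is made at the same level of precision as the paper's own necessity step.
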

\begin{proof}
Suppose both $f^+$ and $-f^-$ of $A$ are unimodal.
It is clear that $A$ is horizontally convex.
Since $A$ is also vertically convex by the hypothesis, it follows that $\mathrm{int}A$ is staircase connected.
Thus, it is enough to just consider the boundary points of $A$.

Denote $A_1=\left\{(x,f^+(x)): x\in ]a,b[\right\}$ and $A_2=\left\{(x,f^-(x)): x\in ]a,b[\right\}$. Let $u=(a,f^+(a))$ and $v=(b,f^+(b))$.
Then both $\mathrm{hw}_u(A)$ and $\mathrm{hw}_v(A)$ are non-zero because $f^+$ and $-f^-$ are unimodal.
For any point $p\in A_1\cup A_2$, we have $\mathrm{vw}_p(A)\neq 0$.
Let $A_3=\mathrm{bd}A\backslash (A_1\cup A_2\cup \left\{u,v\right\})$.
Possibly, $A_3\neq \emptyset$.
But for any point $p$ in $A_3$, we also have $\mathrm{vw}_p(A)\neq 0$.
Thus, the sufficiency holds, by Theorem \ref{thm21}.

Suppose now that $A$ is staircase connected.

Assume that at least one of $f^+$ and $-f^-$ is not unimodal, say $f^+$.
Then there are three points $p,q ,r\in[a,b]$, where $p<q<r$, such that $f^+(p)>f^+(q)$ and $f^+(q)< f^+(r)$.
But this contradicts the horizontal convexity of $A$.
\end{proof}

\section{$s$-extreme points}
A point of a staircase connected set $M$ will be called an $s$-$extreme~point$ of $M$, if it belongs to a staircase in $M$ only as an endpoint.

\begin{thm}\label{thmextp}
Let the staircase connected set $M\subset \mathds{R}^2$ be the closure of its  bounded and connected interior. The point $a\in M$ is an $s$-extreme point of $M$ if and only if either $\mathrm{hw}_a(M)=$ or $\mathrm{vw}_a(M)=0$.
\end{thm}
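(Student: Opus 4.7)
The plan is to prove both directions using Lemma \ref{lem0}, which tells us the staircase connected set $M$ is also orthogonally convex, together with the standing hypotheses $M=\mathrm{cl~int}M$ and $\mathrm{int}M$ connected.

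For the forward direction I would argue by contrapositive: assuming both $\mathrm{hw}_a(M)>0$ and $\mathrm{vw}_a(M)>0$, I pick a point $p\neq a$ in the horizontal component of $X_a\cap M$ containing $a$ and a point $q\neq a$ in the vertical component of $Y_a\cap M$ containing $a$, so that $pa\subset M$ and $aq\subset M$. Then $[p,a,q]$ is an orthogonal path in $M$ with one horizontal and one vertical edge, vacuously a staircase, and $a$ is its intermediate vertex, so $a$ is not $s$-extreme.

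For the converse, assume without loss of generality $\mathrm{hw}_a(M)=0$. By Lemma \ref{lem0} the set $M$ is orthogonally convex, so $X_a\cap M$ is convex in the line $X_a$; together with $\mathrm{hw}_a(M)=0$ this forces $X_a\cap M=\{a\}$, and consequently $a\notin\mathrm{int}M$ (an interior point of $M$ would contain a horizontal segment through $a$). Suppose for contradiction that $a$ lies on a staircase $P\subset M$ not as an endpoint. Then either $a$ is an intermediate vertex of $P$, at which a horizontal edge meets a vertical one, or $a$ lies in the interior of a single edge of $P$. If that edge is horizontal, or if $a$ is an intermediate vertex, then $M$ contains a non-degenerate horizontal segment through $a$, contradicting $X_a\cap M=\{a\}$.

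The only remaining possibility, and the main obstacle, is that $a$ is strictly interior to a vertical segment $[p,q]\subset M$. To rule this out I combine the two topological hypotheses: since $M=\mathrm{cl~int}M$, the points of $[p,q]$ immediately above and below $a$ are limits of interior points of $M$, so there exist $b^+\in\mathrm{int}M$ with $y_{b^+}>y_a$ and $b^-\in\mathrm{int}M$ with $y_{b^-}<y_a$, each arbitrarily close to $a$. The open connected set $\mathrm{int}M\subset\mathds{R}^2$ is path-connected, so a continuous path $\gamma$ from $b^-$ to $b^+$ exists in $\mathrm{int}M$, and the intermediate value theorem applied to the $y$-coordinate of $\gamma$ produces a crossing point $c\in\mathrm{int}M\cap X_a\subset X_a\cap M=\{a\}$. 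Thus $a=c\in\mathrm{int}M$, contradicting $a\notin\mathrm{int}M$. Hence no staircase in $M$ contains $a$ as a non-endpoint, i.e., $a$ is $s$-extreme.
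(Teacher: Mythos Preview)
Your argument is correct and follows essentially the same route as the paper. Both directions match: for ``only if'' you and the paper form a two-edge staircase $[p,a,q]$ from a horizontal and a vertical segment at $a$; for ``if'' both proofs reduce to the collinear case (your ``$a$ strictly interior to a vertical segment'') and dispatch it by the same idea---interior points on both sides of the separating axis-parallel line through $a$, connectedness of $\mathrm{int}M$ forcing that line to meet $\mathrm{int}M$, then orthogonal convexity (equivalently, your observation $X_a\cap M=\{a\}$) yielding the contradiction. The only cosmetic difference is that you fix $\mathrm{hw}_a(M)=0$ up front and invoke the intermediate value theorem explicitly, whereas the paper phrases the same step as ``separated by $Y_a$'' and appeals to connectedness directly.
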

\begin{proof}
\begin{figure}[htbp]
\centering
\includegraphics[width=0.3\textwidth]{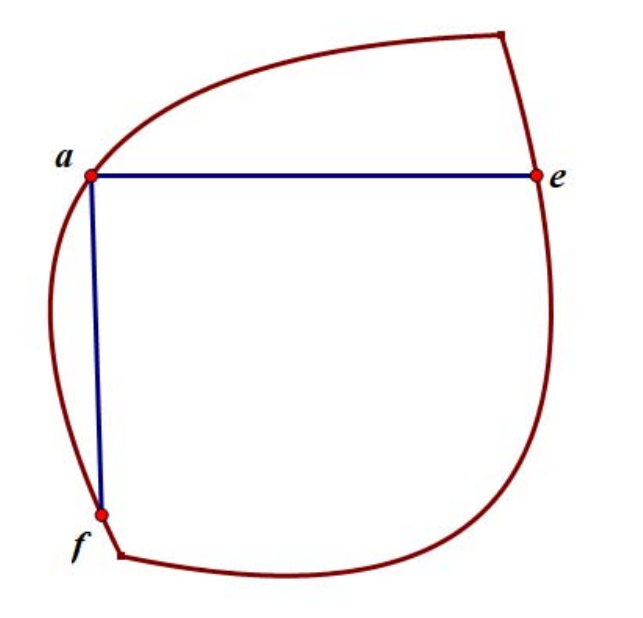}
\caption{}
\label{figaef}           
\end{figure}

By Theorem \ref{thm21}, $\mathrm{hw}_x(M)=\mathrm{vw}_x(M)= 0$ for no $x\in M$.
To prove the "only if" implication,
assume both $\mathrm{hw}_a(M)\neq 0$ and $\mathrm{vw}_a(M)\neq 0$.
Denote a horizontal line-segment in $M$ with endpoint $a$ by $ae$ and a vertical line-segment in $M$ with endpoint $a$ by $af$.
See Figure~\ref{figaef}.
Then $ae\cup af$ is a staircase, but $a$ is not an endpoint of it.
This contradicts the assumption that $a$ is an $s$-extreme point.

\begin{figure}[h]
\centering
\includegraphics[width=0.55\textwidth]{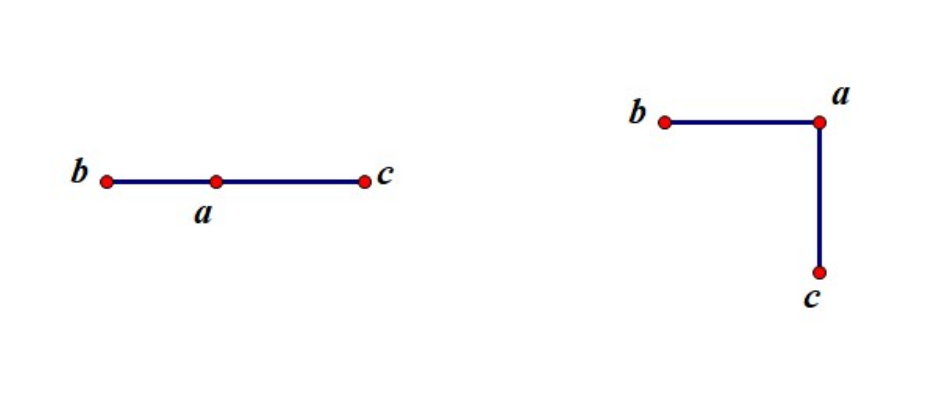}\\
\caption{}
\label{figbac3}           
\end{figure}

The ``if" part:
Assume that $a$ is not an \emph{s}-extreme point.
Then,
there exists a staircase $[b,a,c]$ in $M$.
There are two cases to consider:
$a,b,c$ are collinear or not. See Figure \ref{figbac3}.

In the first case, suppose, without lose of generality, that $[b,a,c]$ is a horizontal line segment.
We have $\mathrm{vw}_a(M)= 0$. 
An interior point of $M$ close to $b$ and an interior point of $M$ close to $c$ are separated by $Y_a$. Since $\mathrm{int}M$ is connected and $a\notin \mathrm{int}M,~Y_a\cap \mathrm{int}M$ contains a point $a'\neq a$. Since $M$ is, by Lemma \ref{lem0}, orthogonally convex, $aa'\subset M$, which contradicts $\mathrm{vw}_a(M)= 0$.

In the second case, $\mathrm{hw}_a(M)\neq 0$ and $\mathrm{vw}_a(M)\neq 0$, which contradicts the assumption.
\end{proof}

\section*{Acknowledgements}

This work is supported by NSF of China (12271139, 12201177, 11871192); the High-end Foreign Experts Recruitment Program of People's
Republic of China (G2023003003L); the Program for Foreign Experts of Hebei Province; the Hebei Natural Science
Foundation (A2024205012, A2023205045); the Special Project on Science and Technology Research and Development Platforms, Hebei Province (22567610H);
the Science and Technology
Project of Hebei Education Department (BJK2023092)
and the China Scholarship Council.

\noindent
$\mathrm{\underline{Xuemei~He},  \underline{Xiaotian~Song}}$\\
School of Mathematical Sciences\\[-2pt]
Hebei Normal University\\[-2pt]
050024 Shijiazhuang,
China\\
xuemei\_he20@163.com, Xiaotiansong163@163.com

\bigskip

\noindent
$\mathrm{\underline{Liping~Yuan}}$ (corresponding author)\\
School of Mathematical Sciences\\[-2pt]
Hebei International Joint Research Center for Mathematics and Interdisciplinary Science\\[-2pt]
Hebei Key Laboratory of Computational Mathematics and Applications\\[-2pt]
Hebei Normal University\\[-2pt]
050024 Shijiazhuang,
China\\
lpyuan@hebtu.edu.cn

\vspace{0.5cm}
\noindent
$\mathrm{\underline{Tudor~Zamfirescu}}$\\
School of Mathematical Sciences\\[-3pt]
Hebei International Joint Research Center for Mathematics and Interdisciplinary Science\\[-3pt]
Hebei Normal University\\[-3pt]
050024 Shijiazhuang, China\\
Fachbereich Mathematik\\[-3pt]
TU Dortmund\\[-3pt]
Dortmund,
Germany\\
Roumanian Academy\\[-3pt]
Bucharest,
Romania\\
tuzamfirescu@gmail.com


\begin{thebibliography}{99}
\bibitem{ccs} M. Breen,  Characterizing certain staircase convex sets in $\mathds{R}^d$,
       \emph{Beitr. Algebra Geom.}, \textbf{51} (2010) 251-261.
\bibitem{gas} M. Breen,  Generating a staircase starshaped set and its kernel in $\mathds {R}^3$ from certain staircase convex subsets,
       \emph{Beitr. Algebra Geom.}, \textbf{53} (2012)  515-528.
\bibitem{scr} M. Breen,  Staircase convex results for a certain class of orthogonal polytopes,
       \emph{Aequationes Math.}, \textbf{89} (2015)  1213-1221.
\bibitem{ios} M. Breen,  Intersections of staircase convex sets in $\mathds {R}^3$ and $\mathds{R}^d$,
       \emph{Beitr. Algebra Geom.}, \textbf{63} (2021)  1-7.
\bibitem{rad} M. Breen, Radon and Carath$\mathrm{\acute{e}}$odory type theorems for staircase convexity,
      \emph{Aequationes Math.}, \textbf{97(3)} (2023)  501-510.
\bibitem{tve} M. Breen, A Tverberg type theorem for staircase convexity,
      \emph{Aequationes Math.}, \textbf{98(3)} (2023)  739-750.

  \bibitem{du} J. Q. Du,   L. Yuan and  T. Zamfirescu, On orthogonal and staircase connectedness in the plane, manuscript.
\bibitem{scs} E. Magazanik and M. A. Perles,  Staircase connected sets,
       \emph{Discrete Comput. Geom.}, \textbf{37} (2007) 587-599.
\bibitem{ftc} D. Y. Montuno and A. Fournier, Finding the $X$-$Y$ convex hull of a set of $X$-$Y$ polygons,
      \emph{Technical Report CSRG-$148$, University of Toronto}, (1982).
\bibitem{otc} T. M. Nicholl, D. T. Lee, Y. Z. Liao and C. K. Wong, On the $X$-$Y$ convex hull of a set of $X$-$Y$ polygons,
      \emph{BIT Numerical Mathematics}, \textbf{23(4)} (1983)  456-471.
\bibitem{otd} T. Ottmann, E. Soisalon-Soininen and D. Wood, On the definition and computation of rectilinear convex hulls,
      \emph{Information Sciences}, \textbf{33(3)} (1984)  157-171.

\bibitem{rt} L. Yuan and  T. Zamfirescu,  Right triple convexity, \emph{J. Convex Analysis},   {\bf 23}   (2016) 1219-1246.

\vspace{15mm}

\end{thebibliography}
\end{document}